%%%%%%%%%%%%%%%%%%%%%%%%%%%%%%%%%%%%%%%%%%%%%%%%%%%%%%%%%%%%%%%%%%%%%%
%%               This is the LaTeX2e file for                       %%
%%         Surfaces of section for Seifert fibrations               %%
%%                            by                                    %%
%%            Bernhard Albach and Hansj\"org Geiges                 %%
%%            16 February 2021, revised 22 June 2021                %%
%%%%%%%%%%%%%%%%%%%%%%%%%%%%%%%%%%%%%%%%%%%%%%%%%%%%%%%%%%%%%%%%%%%%%%

\documentclass{amsart}

\usepackage{amsmath,amssymb,amsthm}

\usepackage[nohug,heads=LaTeX]{diagrams}

\hyphenation{mani-fold mani-folds sub-mani-fold sub-mani-folds topo-logy
Topo-logy geo-metry Geo-metry mono-dromy equi-valent Sei-fert}

\newtheorem{prop}{Proposition}[section]
\newtheorem{thm}[prop]{Theorem}
\newtheorem{lem}[prop]{Lemma}
\newtheorem{cor}[prop]{Corollary}

\theoremstyle{definition}

\newtheorem{ex}[prop]{Example}
\newtheorem{note}[prop]{Note}

\newtheorem{rem}[prop]{Remark}

\newtheorem*{ack}{Acknowledgements}

%%%%%%%%%%%%%%%%%%%% Commands %%%%%%%%%%%%%%%%%%%%%%%%

\def\co{\colon\thinspace}

\newcommand{\oalpha}{\overline{\alpha}}
\newcommand{\bfa}{\mathbf{a}}

\newcommand{\obeta}{\overline{\beta}}
\newcommand{\ob}{\overline{b}}

\newcommand{\C}{\mathbb{C}}
\newcommand{\Ca}{C_{\mathrm{aff}}}
\newcommand{\CP}{\mathbb{C}\mathrm{P}}
\newcommand{\tC}{\widetilde{C}}

\newcommand{\rme}{\mathrm{e}}
\newcommand{\ole}{\overline{e}}

\newcommand{\tF}{\widetilde{F}}

\newcommand{\rmi}{\mathrm{i}}

\newcommand{\tlambda}{\tilde{\lambda}}

\newcommand{\oM}{\overline{M}}
\newcommand{\tmu}{\tilde{\mu}}

\newcommand{\N}{\mathbb{N}}

\newcommand{\bbP}{\mathbb{P}}

\newcommand{\R}{\mathbb{R}}
\newcommand{\orho}{\overline{\rho}}

\newcommand{\oSigma}{\overline{\Sigma}}
\newcommand{\hSigma}{\hat{\Sigma}}
\newcommand{\tSigma}{\widetilde{\Sigma}}
\newcommand{\tsigma}{\tilde{\sigma}}

\newcommand{\tV}{\widetilde{V}}

\newcommand{\Z}{\mathbb{Z}}

\DeclareMathOperator{\Int}{Int}

%%%%%%%%%%%%%%%%%%%%%%%%%%%%%%%%%%%%%%%%%%%%%%%%%%%%%%%%%%%%%%%%%%%%%%
%%%%%%%%%%%%%%%%%%%%%%%%%%%%%%%%%%%%%%%%%%%%%%%%%%%%%%%%%%%%%%%%%%%%%%

\begin{document}

\author[B.~Albach]{Bernhard Albach}
\author[H.~Geiges]{Hansj\"org Geiges}
\address{Mathematisches Institut, Universit\"at zu K\"oln,
Weyertal 86--90, 50931 K\"oln, Germany}
\email{albachbernhard@gmail.com, geiges@math.uni-koeln.de}

\title{Surfaces of section for Seifert fibrations}

\date{}

\begin{abstract}
We classify global surfaces of section for
flows on $3$-manifolds defining Seifert fibrations.
We discuss branched coverings --- one way or the other ---
between surfaces of section for the Hopf flow and those
for any other Seifert fibration of the $3$-sphere, and we relate these
surfaces of section to algebraic curves in weighted
complex projective planes.
\end{abstract}

\keywords{}

\subjclass[2020]{57R30; 14H50, 30F99, 37J39, 57M12}
\thanks{This research is part of a project in the SFB/TRR 191
\textit{Symplectic Structures in Geometry, Algebra and Dynamics},
funded by the DFG (Project-ID 281071066 -- TRR 191)}

\maketitle

%%%%%%%%%%%%%%%%%%%%%%%%%%%%%%%%%%%%%%%%%%%%%%%%%%%%%%%%%%%%%%%%%%%%%%

\section{Introduction}
Global surfaces of section are an important tool for understanding the
dynamics of non-singular vector fields on manifolds. For a given flow
or class of flows, it is a basic question to understand the existence
of such surfaces of section, and their topological properties.

Here are some recent results in this direction in the
context of Reeb dynamics; for a more
comprehensive overview of the literature we refer to~\cite{agz}
and~\cite{sahr18}.
Hryniewicz--Salom\~ao--Wysocki~\cite{hsw} establish sufficient
(and $C^{\infty}$-generically necessary) conditions for
a finite collection of periodic orbits of a Reeb flow on
a closed $3$-manifold to bound a positive global surface
of section of genus zero. Building on the work of
Hryniewicz~\cite{hryn14}, who found a characterisation of
the periodic Reeb orbits of dynamically convex contact forms on the
$3$-sphere that bound disc-like global surfaces of section,
existence results for higher genus surfaces of section
have been established by Hryniewicz--Salom\~ao--Siefring~\cite{hss}.

Even for one of the most simple flows in dimension three, the
Hopf flow on the $3$-sphere, global surfaces of section display
rich features. The paper \cite{agz} determines the topology
of such surfaces of section and describes various ways to construct them.
Furthermore, it was shown there how to relate such surfaces of section
to algebraic curves in the complex projective plane, and how to interpret
them as a toy model for the elementary degenerations arising in
symplectic field theory; this led to a new proof of the degree-genus
formula for complex projective curves.

The aim of the present paper is to extend the results of \cite{agz}
to all flows on $3$-manifolds that define a Seifert fibration.
We decide the existence question and give a
topological classification of the global surfaces of section
for such flows. For some classification statements we
restrict attention to \emph{positive}
surfaces of section, which means that the boundary components are supposed
to be oriented positively by the flow (see Section~\ref{section:sos}
for details). This is motivated by the particular interest of these
positive sections, for instance in Reeb dynamics.

We discuss branched coverings between surfaces of section for the
Hopf flow and those of any other Seifert fibration of the
$3$-sphere~$S^3$. Rather intriguingly, these branched coverings can go
either way, and both approaches can be used to compute the genus of
positive $d$-sections with a Riemann--Hurwitz argument.

We also relate the surfaces of section in $S^3$ to algebraic curves
in \emph{weighted} complex projective planes. On the one hand,
the degree-genus formula for such algebraic curves
can then be used to compute, once again, the genus of positive $d$-sections.
Conversely, our direct geometric arguments for surfaces of section
provide an alternative proof of this degree-genus formula.
These considerations have contributed to
filling a gap in the earlier versions of~\cite{hosg20},
and they give a more geometric alternative to the arguments
in~\cite{orwa72}.

We hope that the explicit descriptions of global surfaces of section
in the present paper will prove useful for the study of Besse manifolds
(cf.\ Section~\ref{subsection:Besse}) in Finsler geometry and Reeb dynamics.
\section{Seifert manifolds}
\label{section:Seifert}
Let $M$ be a closed, oriented $3$-manifold with a Seifert fibration
$M\rightarrow B$ over a closed, oriented surface~$B$.
Many of the results in this paper also hold,
\emph{mutatis mutandis}, when $M$ or $B$ is not orientable.
However, for the interpretation of the Seifert fibres as the
orbits of a flow, and to be able to speak of \emph{positive}
surfaces of section, the fibres have to be oriented.

We refer the reader to \cite{jane83} for the basic theory of
Seifert fibred spaces.
\subsection{Seifert invariants}
We recall the notation and
conventions for the Seifert invariants from~\cite{gela18}
that we shall use in the present paper. In the description of $M$ as
\[ M=M\bigl(g;(\alpha_1,\beta_1),\ldots, (\alpha_n,\beta_n)\bigr),\]
the non-negative integer $g$ stands for the genus of~$B$,
the $\alpha_i$ are positive integers giving the multiplicities of the
singular fibres, and each $\beta_i$ is an integer coprime with the
respective~$\alpha_i$, describing the local behaviour near
the singular fibre. A pair $(\alpha_i,\beta_i)$ with $\alpha_i=1$
does not actually give rise to a singular fibre,
but it corresponds to a modification of the fibration
that contributes to the Euler number.

The non-singular fibres are called regular. Near any regular fibre,
the Seifert fibration is a trivial $S^1$-bundle.

When we speak of a `Seifert manifold $M$', we always mean that
a Seifert fibration has been chosen on the $3$-manifold~$M$.
Some $3$-manifolds admit non-isomorphic Seifert fibrations~\cite{gela18,gela}.

The topological interpretation of these invariants is as follows.
Let $B_0$ be the surface with boundary obtained from
$B$ by removing the interior of $n$ disjoint discs
$D^2_1,\ldots,D^2_n\subset B$.
Denote by $M_0=B_0\times S^1\rightarrow B_0$ the trivial
$S^1$-bundle over~$B_0$.
Write $S^1_1,\ldots, S^1_n$ for the boundary circles of $B_0$,
with the \emph{opposite} of the orientation induced as
boundary of~$B_0$.
We write the (isotopy class of) the fibre in $M_0$ as
$h=\{*\}\times S^1$. On the boundary of $M_0$ we consider the
curves $q_i=S^1_i\times\{1\}$, with $1\in S^1\subset\C$.
Let $V_i=D^2\times S^1$, $i=1,\ldots,n$, be $n$ copies of a solid torus
with respective meridian and longitude
\[ \mu_i=\partial D^2\times\{1\},\;\; \lambda_i=\{1\}\times S^1\subset
\partial V_i.\]
The Seifert invariants $(\alpha_i,\beta_i)$ then encode the
identifications
\begin{equation}
\label{eqn:ident1}
\mu_i=\alpha_i q_i+\beta_i h,\;\;\; \lambda_i=\alpha_i'q_i+\beta_i'h,
\end{equation}
where integers $\alpha_i',\beta_i'$ are chosen such that
\[ \begin{vmatrix}
\alpha_i & \alpha_i'\\
\beta_i  & \beta_i'
\end{vmatrix}=1.\]
The identifications \eqref{eqn:ident1} are equivalent to
\begin{equation}
\label{eqn:ident2}
h=-\alpha_i'\mu_i+\alpha_i\lambda_i,\;\;\; q_i=\beta_i'\mu_i-\beta_i\lambda_i.
\end{equation}
Writing $C_i$ for the spine $\{0\}\times S^1$ of the solid torus~$V_i$,
we deduce from \eqref{eqn:ident2}
that the fibre $h$ is homotopic to $\alpha_iC_i$ in~$V_i$,
and the boundary curve~$q_i$, to $-\beta_iC_i$. For $\alpha_i>1$,
the spine $C_i$ is the corresponding singular fibre.

The \emph{Euler number} of the Seifert fibration is defined as
\[ e=-\sum_{i=1}^n\frac{\beta_i}{\alpha_i}.\]
\subsection{Equivalences of Seifert invariants}
\label{subsection:equivalences}
Up to isomorphism, a Seifert fibration is determined by the
Seifert invariants. Different sets of Seifert invariants
correspond to isomorphic Seifert fibrations if and only if they
are related to each other by the following operations,
see \cite[Theorem~1.5]{jane83}:
\begin{itemize}
\item[(o)] Permute the $n$ pairs $(\alpha_i,\beta_i)$.
\item[(i)] Add or delete any pair $(\alpha,\beta)=(1,0)$.
\item[(ii)] Replace each $(\alpha_i,\beta_i)$ by
$(\alpha_i,\beta_i+k_i\alpha_i)$, where $\sum_{i=1}^n k_i=0$.
\end{itemize}
These operations allow one to write a Seifert
manifold in terms of \emph{normalised} Seifert invariants
\[ M\bigl(g;(1,b),(\alpha_1,\beta_1),\ldots,(\alpha_n,\beta_n)\bigr),\]
where $b\in\Z$ and $0<\beta_i<\alpha_i$. Notice that the Euler number
is invariant under these operations.
In general, we shall not be using normalised invariants.
Occasionally it is convenient to collect all pairs $(\alpha_i,\beta_i)$
with $\alpha_i=1$ into a single pair $(1,b)$, so that the remaining
pairs actually correspond to singular fibres.
\subsection{Surgery description}
For Seifert manifolds with $g=0$, one can easily translate
the gluing prescriptions into a surgery diagram, and the equivalences
of Seifert invariants then correspond to well-known transformations
of surgery diagrams.

A surgery description of $S^2\times S^1$ is given by a $0$-framed unknot
$K_0$ in the $3$-sphere~$S^3$.
Any meridian $m_i$ of $K_0$ corresponds to an $S^1$-fibre
of $S^2\times S^1\rightarrow S^2$. The curves $q_i$ then correspond
to a meridian of $m_i$, and the fibre $h$ defines the Seifert framing
of $m_i$ in the unsurgered~$S^3$. So the gluing instruction
$\mu_i=\alpha_iq_i+\beta_ih$ from \eqref{eqn:ident1}
amounts to a surgery with coefficient $\alpha_i/\beta_i$ along~$m_i$.
It follows that the surgery diagram
for $M(0;(\alpha_1,\beta_1),\ldots,(\alpha_n,\beta_n))$
consists of a $0$-framed unknot $K_0$ and $n$ meridians with surgery framing
$\alpha_i/\beta_i$, $i=1,\ldots,n$.

Equivalence (i) in Section~\ref{subsection:equivalences}
corresponds to adding or removing an $\infty$-framed meridian of $K_0$,
which defines a trivial surgery. Equivalence (ii) comes from
performing Rolfsen twists (cf.~\cite[Section 9.H]{rolf76})
on the $n$ meridians. Surgery along $m_i$ is defined by removing
a tubular neighbourhood of $m_i$, and then regluing
it according to the surgery instruction given by the surgery coefficient.
A Rolfsen twist amounts to performing a $k_i$-fold Dehn twist,
$k_i\in\Z$, along the `neck' created by removing the tubular neighbourhood
of~$m_i$ before regluing a solid torus.
This changes the surgery framing of $K_0$ by~$k_i$, and
the surgery coefficient of $m_i$ to $\alpha_i/(\beta_i+k_i\alpha_i)$,
since the Dehn twist adds $k_i$ meridians to a parallel of $K_0$,
and it sends $\alpha_iq_i+\beta_ih$ to
$\alpha_iq_i+(\beta_i+k_i\alpha_i)h$. Observe that these Dehn twists
preserve the $S^1$-fibration. The condition
$\sum_i k_i=0$ comes from the requirement that
the unknot $K_0$ should retain its $0$-framing after the $n$ Rolfsen
twists, so that surgery along $K_0$ still realises $S^2\times S^1$.

For more on surgery descriptions of Seifert manifolds, see~\cite{save99}.
\subsection{Besse flows}
\label{subsection:Besse}
Flows of non-singular vector fields where all orbits are closed
are often referred to as \emph{Besse flows}, in particular in
Reeb dynamics; see~\cite{ggm,kela,mara}. In this context it is worth
mentioning that, by a classical result of Epstein~\cite{epst72},
any Besse flow on a $3$-manifold defines a Seifert fibration.

\section{Global surfaces of section}
\label{section:sos}
The orientations of $M$ and $B$ in the Seifert fibration
$M\rightarrow B$ induce an orientation on the fibres.
Equivalently, we may think of the Seifert fibres as the orbits
of an effective, fixed point free $S^1$-action. A singular fibre
with Seifert invariants $(\alpha,\beta)$, $\alpha>1$, consists
of points with isotropy group
$\Z_{\alpha}=\bigl\langle\rme^{2\pi\rmi/\alpha}\bigr\rangle<S^1$.

A \textbf{global surface of section}
is an embedded compact surface $\Sigma\subset M$ whose boundary
is a collection of Seifert fibres, and whose interior intersects
all other Seifert fibres transversely. The interior of $\Sigma$ will
intersect every regular fibre (except those forming the
boundary of~$\Sigma$) in the same number $d$ of points.
A singular fibre of multiplicity $\alpha$ can also arise as
a boundary component, or it will intersect the interior of $\Sigma$
in $d/\alpha$ points. For short, we speak of a \textbf{$d$-section}.

\begin{note}
\label{note:divisibility}
Given a $d$-section $\Sigma$, the multiplicity $\alpha$ of a singular fibre
not in the boundary of $\Sigma$ divides~$d$.
\end{note}

We orient $\Sigma$ such that these intersection points are positive.
The boundary orientation of a component of $\partial\Sigma$
may or may not coincide with the orientation as a Seifert fibre.
If the orientations coincide for every component of $\partial\Sigma$,
we call the $d$-section \emph{positive}.

See \cite{agz} for a variety of explicit surfaces of section for
the Hopf fibration $S^3\rightarrow S^2$.
\section{$1$-sections for Seifert fibrations}
We begin with a necessary criterion for the existence of a $1$-section.

\begin{lem}
\label{lem:1-section}
If a Seifert manifold admits a $1$-section, then every singular fibre
is a boundary fibre, and the corresponding pair $(\alpha,\beta)$
satisfies $\beta\equiv\pm 1$ $\mathrm{mod}~\alpha$, depending on whether
the singular fibre is a positive or negative boundary component.
\end{lem}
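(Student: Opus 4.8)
The first assertion is immediate from Note~\ref{note:divisibility}: a $1$-section has $d=1$, so the multiplicity $\alpha$ of any singular fibre not on the boundary would have to divide $1$, forcing $\alpha=1$ and contradicting singularity. Hence every singular fibre lies in $\partial\Sigma$.

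For the congruence I would work locally in the fibred solid torus $V=V_i$ glued in along a singular fibre $C=C_i$ with invariants $(\alpha,\beta)=(\alpha_i,\beta_i)$. Set $\gamma=\Sigma\cap\partial V$ and $\Sigma_V=\Sigma\cap V$. Since $C$ is a boundary fibre and $\Sigma$ is transverse to all regular fibres, $\Sigma_V$ is a compact subsurface of $V$ whose boundary consists of $C$ together with the curve system $\gamma\subset\partial V$; I would first argue that $\Sigma_V$ is in fact an annulus, so that $\gamma$ is a single embedded curve, and write its class on the torus $\partial V$ as $[\gamma]=p\mu+r\lambda$.

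Next I would extract two constraints. Homologically, $\partial\Sigma_V$ bounds in $\Sigma_V$, hence is null-homologous in $V$; since $\mu\mapsto 0$ and $\lambda\mapsto[C]$ under $H_1(\partial V)\to H_1(V)\cong\Z\langle C\rangle$, and $C$ occurs once, with a sign determined by whether the boundary orientation of $\Sigma$ agrees with the flow, this forces $r=\pm1$, the sign recording whether $C$ is a positive or negative boundary component. For the second constraint I would push $\partial V$ slightly into $V$ to a parallel torus fibred by regular fibres; each such fibre, of class $h$, meets $\Sigma$ in exactly $d=1$ point, and all these intersections are positive, so on the torus $|\gamma\cdot h|=d=1$.

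Finally I would combine these using the identity $h=-\alpha'\mu+\alpha\lambda$ from \eqref{eqn:ident2}. With $\mu\cdot\lambda=\pm1$ this yields $\gamma\cdot h=\pm(p\alpha+r\alpha')$, so $|p\alpha+r\alpha'|=1$ together with $r=\pm1$ gives $\alpha'\equiv\pm1\pmod\alpha$. Feeding this into the defining relation $\alpha\beta'-\alpha'\beta=1$, which says $\alpha'\beta\equiv-1\pmod\alpha$, converts the congruence for $\alpha'$ into $\beta\equiv\pm1\pmod\alpha$. The one genuinely delicate point, and the step I expect to be the main obstacle, is matching the two signs correctly, i.e.\ verifying that the $+1$ case corresponds precisely to a positive boundary fibre and the $-1$ case to a negative one; this requires carefully tracking all the orientation conventions fixed in Sections~\ref{section:Seifert} and~\ref{section:sos}, namely the orientations of $M$ and $B$, the induced fibre orientation, the sign convention $\mu\cdot\lambda=\pm1$, and the boundary orientation of $\Sigma$, rather than any further geometric input.
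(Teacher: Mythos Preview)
Your strategy matches the paper's: analyse $\Sigma\cap\partial V$ in a fibred solid torus $V$ about the singular fibre, constrain its class via the intersection number with the fibre class $h$, and compare with $C$ via homology in~$V$. The paper's execution is shorter because it works directly in the basis $(q,h)$ rather than $(\mu,\lambda)$: since $q\bullet h=1$ and $\sigma\bullet h=-1$ (with $\sigma$ oriented as boundary of $\Sigma\setminus\Int(V)$), one has $\sigma=-q+ah$ immediately, and then the homotopies $q\sim-\beta C$, $h\sim\alpha C$ in $V$ from \eqref{eqn:ident2} give $\sigma\sim(\beta+a\alpha)C$ at once --- no detour through $\alpha'$ and the determinant relation $\alpha\beta'-\alpha'\beta=1$. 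This choice of basis also dissolves the sign issue you flag as the main obstacle: $\sigma$ is homotopic in $V$ to $+C$ or $-C$ exactly according as $C$ is a positive or negative boundary fibre, so equating with $(\beta+a\alpha)C$ pins down the sign of the congruence directly, with no residual bookkeeping. Finally, your claim that $\Sigma_V$ is an annulus is not needed --- the argument is purely homological and goes through verbatim if $\gamma$ is taken as a homology class on $\partial V$.
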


\begin{proof}
The first claim follows from Note~\ref{note:divisibility}.
For the second statement, we consider a solid torus $V$
around the singular fibre $C$ as in Section~\ref{section:Seifert}.
Let $\sigma$ be the curve $\partial V\cap\Sigma$, oriented as a
boundary component of $\Sigma\setminus\Int(V)$. The curve $\sigma$ must be
homotopic in $V$ to~$\pm C$, depending on the sign of the boundary
fibre~$C$.

The Seifert fibres lying in $\partial V$ represent the
fibre class~$h$. Since these fibres intersect $\Sigma$ positively
in a single point, the intersection number $\sigma\bullet h$
on $\partial V$, computed with respect to the oriented basis $(\mu,\lambda)$,
equals~$-1$ (\emph{sic}!). Since $q\bullet h=1$, this means that
$\sigma$ can be written as $\sigma=-q+ah$ for some $a\in\Z$, which
in $V$ is homotopic to $(\beta+a\alpha)C$. For this to equal $\pm C$,
we must have $\beta\equiv \pm 1$ mod~$\alpha$.
\end{proof}

This lemma implies that a Seifert manifold admitting a $1$-section is of
the form $M\bigl(g;(1,b),(\alpha_1,\pm 1),\ldots,(\alpha_n,\pm 1)\bigr)$,
where we may assume that the $\alpha_i$ are greater than~$1$. The individual
signs determine the topology of the Seifert manifold. For $\alpha_i=2$,
both signs are possible.

We now show that manifolds of this form \emph{do} admit a $1$-section,
and we can characterise those admitting a positive one.

\begin{thm}
\label{thm:1-section}
A Seifert manifold admits a $1$-section with $n$ singular boundary
fibres and $b_{\pm}$ positive resp.\ negative regular boundary fibres
if and only if it is isomorphic to
$M\bigl(g;(1,b_+ - b_-),(\alpha_1,\pm 1),\ldots,(\alpha_n,\pm 1)\bigr)$.
The sign in $(\alpha_i,\pm 1$) determines the sign
of the corresponding singular boundary fibre.
Any two $1$-sections on a given Seifert manifold are isotopic,
provided they have the same number of positive resp.\ negative
regular boundary fibres.
\end{thm}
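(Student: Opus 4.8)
The plan is to prove the isotopy statement in two stages: first normalise both $1$-sections in a neighbourhood of their boundary fibres, and then compare the resulting sections over the base~$B$.

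First I would fix standard local models near the boundary fibres. Near a singular boundary fibre $C_i$, the piece $\Sigma\cap V_i$ is a surface with boundary $C_i$ on the core and $\sigma_i=\Sigma\cap\partial V_i$ on $\partial V_i$; by the proof of Lemma~\ref{lem:1-section} the slope of $\sigma_i$ is $-q_i+a_ih$ with $\beta_i+a_i\alpha_i=\pm 1$, and the sign is forced by the sign of the boundary fibre. The Dehn twists in $V_i$ realising equivalence~(ii) of Section~\ref{subsection:equivalences} are ambient isotopies of $M$ that preserve the fibration, so I can use them to normalise each $a_i$ and make $\Sigma_0\cap V_i$ and $\Sigma_1\cap V_i$ coincide with one fixed model. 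Near a regular boundary fibre the model is the product annulus $q\times[0,\epsilon)$ capped onto the fibre, again rigid once the sign is fixed. Since the hypothesis provides the same number of positive and negative regular boundary fibres, and any regular fibre can be carried to any other by an isotopy covering an isotopy of $B$, I can match up the regular boundary fibres of the two sections and arrange that $\Sigma_0$ and $\Sigma_1$ agree on a neighbourhood $N$ of the entire boundary.

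Over the complement $B_0=B\setminus\Int(\bigcup D_i^2)$ the two normalised sections are then sections of the trivial bundle $M_0=B_0\times S^1$ agreeing along $\partial M_0$, so they differ by a class in $H^1(B_0,\partial B_0;\Z)$. The part of this difference carried by the loops encircling the removed discs can be absorbed into the framings $a_i$, and hence removed by the same fibre-twisting isotopies supported in the collars and the solid tori~$V_i$. What remains is the component of the difference supported on the genus of~$B$.

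This genus-supported component is the genuine difficulty, and I expect it to be the main obstacle. Over a handle of $B$ the two sections differ by a full vertical twist in the $S^1$-direction, which cannot be undone by any isotopy keeping $\Sigma$ transverse to the fibres, so one must leave the category of fibred sections and argue globally. The natural approach is to use the flow: cutting $M$ along $\Sigma$ and flowing exhibits the complement of the boundary fibres as the mapping torus of the periodic first-return map, so that each $1$-section is a fibre of a fibration of this (Haken) complement over~$S^1$. Uniqueness of the fibre up to isotopy would then finish the argument, but only once the two fibrations are shown to be isotopic; equivalently, one must show that a genuine $1$-section carries \emph{no} genus-supported twist, so that both sections determine the same class in $H^1$. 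Establishing this — ruling out the apparent freedom along the handles using the global Seifert and flow structure rather than purely homological data — is the crux of the proof.
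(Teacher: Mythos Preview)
Your proposal addresses only the isotopy claim and omits the bulk of the theorem: the ``if and only if'' classification of which Seifert manifolds admit a $1$-section with prescribed boundary data. The paper devotes most of its proof to this. For the ``if'' direction it gives an explicit construction: take the constant section $B_0\times\{1\}$ of the trivialised part $M_0=B_0\times S^1$, and extend it into each reglued solid torus $V_i$ by a vertical annulus $A$ with $\partial A=\pm C_i+q_i$; the gluing $(\alpha_i,\pm1)$, with the choice $\alpha_i'=\mp1$, $\beta_i'=0$, makes this annulus meet every nearby fibre once and positively. For the ``only if'' direction it uses a given $1$-section to trivialise the fibration over~$B_0$, forcing $a_i=0$ in the expression $\sigma_i=-q_i+a_ih$ and hence $\beta_i=\pm1$. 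Both of these arguments are absent from your proposal and need to be supplied.

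For the isotopy statement itself, the paper's argument is brief: it invokes \cite[Proposition~3.1]{agz} and adds only the remark that the signs in $(\alpha_i,\pm1)$ at the singular fibres are forced by the boundary data, so after a preliminary isotopy the two sections may be assumed to agree there. Your boundary normalisation is in the same spirit and goes somewhat further. The genus-supported obstruction you isolate, however, is genuine and not an artefact of your method: on $T^3=M\bigl(1;(1,0)\bigr)$ the closed $1$-sections $\{\theta_3=0\}$ and $\{\theta_3=\theta_1\}$ represent distinct classes in $H_2(T^3)$ and are therefore not isotopic, yet both have $b_+=b_-=0$. The cited result from \cite{agz} concerns the Hopf fibration, where $g=0$ and this phenomenon does not occur. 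So the difficulty you flag is not a missing step you have failed to carry out; it indicates that the uniqueness assertion, read literally for $g\geq 1$, requires an additional constraint (such as fixing the homology class of the section) beyond what the reference provides.
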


\begin{ex}
In \cite[Section~2.5]{agz} the reader finds an explicit example
of a pair of pants $1$-section for the Hopf fibration
with two positive and one negative boundary fibres, corresponding to
the description of the Hopf fibration as the Seifert manifold
$M\bigl(0;(1,1),(1,1),(1,-1)\bigr)$.
\end{ex}

\begin{proof}[Proof of Theorem~\ref{thm:1-section}]
Given a Seifert manifold of the described form,
we construct a $1$-section as follows. Remove (open) solid tori
around the $n$ singular fibres, and a further $b_+ + b_-$ solid tori
around regular fibres. The remaining part $M_0$ of $M$
is of the form $B_0\times S^1\rightarrow B_0$,
and we take the constant section
$B_0\times\{1\}$ as the part of our desired $1$-section over~$B_0$.

For the gluing of a solid torus $V$ corresponding to a singular fibre
of type $(\alpha,\pm 1)$, we may take
$\alpha'=\mp 1$ and $\beta'=0$. This means we have the
identifications
\[ h=\pm\mu+\alpha\lambda,\;\;\; q=\mp\lambda.\]
Therefore, the oriented vertical annulus $A$ in $V$ with boundary
$\partial A=\pm C\mp\lambda=\pm C+q$ will intersect
each Seifert fibre in $V\setminus\{C\}$ transversely in a
single positive point, and it glues with $B_0$
to extend the $1$-section. (For the sign of the intersection point,
notice that $-q=\pm \lambda$ takes the role of $\sigma$
in the preceding proof, and $\pm\lambda\bullet h=-1$.)
Observe that this argument did not require $\alpha>1$. Thus,
in the same way we may extend the $1$-section over $b_{\pm}$
solid tori glued according to the prescription $(1,\pm 1)$.

Conversely, given a $1$-section as described, we can use
it to trivialise the Seifert fibration outside solid tori
around the boundary fibres. Then $a=0$ in the notation
of the proof of Lemma~\ref{lem:1-section}, and $\beta=\pm 1$.

The uniqueness statement is proved just like \cite[Proposition~3.1]{agz}.
The only difference is that we now have singular fibres,
but if we fix the number of positive resp.\ negative regular boundary fibres,
the signs in $(\alpha_i,\pm 1)$ must be the same for any two given sections,
even when some of the $\alpha_i$ equal~$2$ (perhaps after reordering).
This implies that the two sections can be assumed to coincide near
the singular fibres after a preliminary isotopy.
\end{proof}
\section{$d$-sections for Seifert fibrations}
\label{section:d}
Notice that in Theorem~\ref{thm:1-section} it was not necessary to
assume $\alpha_i>1$. We continue to work with unnormalised
Seifert invariants, although occasionally it may
be convenient to collect all pairs $(\alpha_i,\beta_i)$
with $\alpha_i=1$ into a single term.
\subsection{A necessary criterion}
Again we start with a necessary criterion for the existence of
a section.

\begin{lem}
\label{lem:d-section}
Suppose $M\bigl(g;(\alpha_1,\beta_1),\ldots,(\alpha_n,\beta_n)\bigr)$
admits a $d$-section. If the singular fibre $C=C_i$ with
invariants $(\alpha,\beta)=(\alpha_i,\beta_i)$, $\alpha_i>1$,
is not a boundary fibre,
then $\alpha|d$. If $C$ is a boundary fibre, then
$\alpha|(d\beta\mp 1)$, where the sign depends on $C$ being a positive
or negative boundary.
\end{lem}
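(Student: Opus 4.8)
The plan is to handle the two cases separately, the first being a direct appeal to an earlier observation and the second a computation that parallels the proof of Lemma~\ref{lem:1-section}. If the singular fibre $C$ with $\alpha>1$ is \emph{not} a boundary fibre, then the divisibility $\alpha\mid d$ is precisely the content of Note~\ref{note:divisibility}, so nothing further is required.

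For the case where $C$ is a boundary fibre, I would take a saturated solid torus $V$ around $C$ as in Section~\ref{section:Seifert}, with meridian–longitude basis $(\mu,\lambda)$ on $\partial V$, fibre class $h=-\alpha'\mu+\alpha\lambda$ and boundary curve $q=\beta'\mu-\beta\lambda$, recalling that in $V$ one has $h\sim\alpha C$ and $q\sim-\beta C$. Set $\sigma=\partial V\cap\Sigma$, oriented as a boundary component of $\Sigma\setminus\Int(V)$. The first key step is an intersection count on $\partial V$: every Seifert fibre lying in $\partial V$ represents $h$ and meets $\Sigma$ —and hence meets $\sigma$— transversely and positively in exactly $d$ points, so with the same orientation convention (the sign flip of the \emph{sic}!) as in Lemma~\ref{lem:1-section} I expect $\sigma\bullet h=-d$.

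The second step is algebraic. Since $(q,h)$ is a basis of $H_1(\partial V)$ with $q\bullet h=1$, the count forces $\sigma=-dq+ah$ for some $a\in\Z$, which is homotopic in $V$ to $(d\beta+a\alpha)C$. The third step identifies this class geometrically: the piece $A=\Sigma\cap V$ is a compact surface whose boundary consists of $\sigma$ together with the single circle $C=\partial\Sigma\cap V$, so $A$ exhibits $[\sigma]=\pm[C]$ in $H_1(V)$, the sign recording whether $C$ is a positive or negative boundary. Equating the two expressions for $[\sigma]$ gives $d\beta+a\alpha=\pm 1$, that is, $\alpha\mid(d\beta\mp 1)$.

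The one thing I expect to be delicate, and the main obstacle, is the bookkeeping of signs: the orientation reversal that yields $\sigma\bullet h=-d$ rather than $+d$, and the correlation between the sign of the boundary fibre and the sign in $[\sigma]=\pm[C]$. I would pin these down by specialising to $d=1$ and comparing with Lemma~\ref{lem:1-section}, where a positive boundary corresponds to $\sigma\sim+C$ and hence to $\beta\equiv 1$; the general argument should then reduce to $d\beta\equiv\pm 1 \bmod \alpha$ under the identical conventions, with the positive boundary giving the top sign.
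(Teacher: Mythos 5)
Your proposal is correct and follows essentially the same route as the paper's proof: Note~\ref{note:divisibility} for the non-boundary case, and for a boundary fibre the intersection count $\sigma\bullet h=-d$ on $\partial V$, the resulting expression $\sigma=-dq+ah$ with its homotopy class $(d\beta+a\alpha)C$ in $V$, and the identification of $[\sigma]$ with $\pm[C]$, forcing $d\beta+a\alpha=\pm 1$. Your homological justification of $[\sigma]=\pm[C]$ via the piece $\Sigma\cap V$ merely makes explicit what the paper asserts directly (there phrased as $\sigma$ being homotopic in $V$ to $\pm C$, with the same sign convention).
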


\begin{rem}
For $\alpha>1$ the two divisibility conditions are mutually exclusive
(in the second case,
$\alpha$ and $d$ are coprime), so it is
determined \emph{a priori} --- provided a $d$-section exists ---
which singular fibres will be boundary fibres.
Also, the sign of the boundary fibres
is predetermined, unless $\alpha=2$.
\end{rem}

\begin{proof}[Proof of Lemma~\ref{lem:d-section}]
For $C$ not being a boundary fibre, use
Note~\ref{note:divisibility}. If $C$ is a boundary fibre, we argue
as in the proof of Lemma~\ref{lem:1-section}, except that now we
have $\sigma\bullet h=-d$. This implies that $\sigma=-dq+ah$
for some $a\in\Z$, which is homotopic in $V$ to $(d\beta+a\alpha)C$,
whence $d\beta+a\alpha=\pm 1$.
\end{proof}
\subsection{The $\Z_d$-quotient}
If we think of a Seifert fibration as a manifold $M$ with an effective,
fixed point free $S^1$-action, the subgroup $\Z_d=\bigl\langle
\rme^{2\pi\rmi/d}\bigr\rangle<S^1$ also acts on~$M$,
and the quotient $M/\Z_d$ is again Seifert fibred. The
regular fibres of $M$ have length $2\pi$, a singular fibre of
multiplicity $\alpha$ has length $2\pi/\alpha$. In the quotient
$M/\Z_d$, the regular fibres have length $2\pi/d$, and a singular
fibre of multiplicity $\alpha$ in $M$ descends to a singular
fibre of length $2\pi\gcd(\alpha,d)/\alpha d$, so its multiplicity
is $\alpha/\gcd(\alpha,d)$.

The Seifert invariants of $M/\Z_d$ can be computed from
a section of $M\rightarrow B$ over $B_0\subset B$ (in the notation
of Section~\ref{section:Seifert}), since this descends to
just such a section of $M/\Z_d\rightarrow B$,
see \cite[Proposition~2.5]{jane83}.

\begin{prop}
\label{prop:Zd-quotient}
The $\Z_d$-quotient $\oM=M/\Z_d$ of
\[ M=M\bigl(g;(\alpha_1,\beta_1),\ldots,(\alpha_n,\beta_n)\bigr)\]
is the Seifert manifold
\[ \oM=M\bigl(g;(\oalpha_1,\obeta_1),\ldots,(\oalpha_n,\obeta_n)
\bigr),\]
where $\oalpha_i=\alpha_i/\gcd(\alpha_i,d)$ and
$\obeta_i=d\beta_i/\gcd(\alpha_i,d)$.
In particular, the Euler numbers are related by
$e(\oM\rightarrow B)=d\cdot e(M\rightarrow B)$.\qed
\end{prop}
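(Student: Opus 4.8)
The plan is to work throughout with the description of $M$ as a manifold carrying an effective, fixed point free $S^1$-action, and to track the relevant curves under the quotient map $\pi\co M\to\oM=M/\Z_d$. Over the base part $B_0$ the $\Z_d$-action is free, so $\pi$ restricts to a $d$-fold covering $M_0=B_0\times S^1\to\oM_0=B_0\times S^1$ which on fibres is the map $z\mapsto z^d$. As already noted (and following \cite[Proposition~2.5]{jane83}), the section $B_0\times\{1\}$ descends to a section of $\oM\to B$; in particular the base keeps its genus~$g$, the descended curves $\overline{q}_i$ and the quotient fibre $\overline{h}$ furnish the analogues of $q_i$ and $h$ for~$\oM$, and on the first homology of the boundary tori one reads off $\pi_*(q_i)=\overline{q}_i$ and $\pi_*(h)=d\,\overline{h}$.

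Next I would analyse $\pi$ near a singular fibre. Since the isotropy of $C=C_i$ under $S^1$ is $\Z_{\alpha_i}$, the subgroup $\Z_d$ acts on $C_i$ with stabiliser $\Z_d\cap\Z_{\alpha_i}=\Z_{g_i}$, where $g_i=\gcd(\alpha_i,d)$; hence $\Z_{d/g_i}$ acts freely on $C_i$ and the restriction $C_i\to\overline{C}_i$ is a $(d/g_i)$-fold covering, so $\pi_*(C_i)=(d/g_i)\overline{C}_i$. The quotient $\overline{V}_i=V_i/\Z_d$ is again a fibred solid torus with core $\overline{C}_i=\pi(C_i)$, and its Seifert invariants $(\oalpha_i,\obeta_i)$ are, exactly as in Section~\ref{section:Seifert}, characterised by the homotopies $\overline{h}\simeq\oalpha_i\overline{C}_i$ and $\overline{q}_i\simeq-\obeta_i\overline{C}_i$ inside $\overline{V}_i$.

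The computation of the invariants is then a matter of pushing the relations $h\simeq\alpha_iC_i$ and $q_i\simeq-\beta_iC_i$ (valid in $V_i$) forward by~$\pi$. From the first, $d\,\overline{h}=\pi_*(h)\simeq\alpha_i(d/g_i)\overline{C}_i$, whence $\overline{h}\simeq(\alpha_i/g_i)\overline{C}_i$ and $\oalpha_i=\alpha_i/g_i$; from the second, $\overline{q}_i=\pi_*(q_i)\simeq-\beta_i(d/g_i)\overline{C}_i$, whence $\obeta_i=d\beta_i/g_i$. A short coprimality check (using $\gcd(\alpha_i,\beta_i)=1$ together with $\gcd(\alpha_i/g_i,d/g_i)=1$) confirms that $(\oalpha_i,\obeta_i)$ is a genuine Seifert invariant. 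The Euler number assertion is then immediate:
\[
e(\oM\to B)=-\sum_{i=1}^n\frac{\obeta_i}{\oalpha_i}
=-\sum_{i=1}^n\frac{d\beta_i}{\alpha_i}=d\cdot e(M\to B).
\]

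I expect the main obstacle to be the careful bookkeeping of the two competing covering degrees: the fibre direction is compressed by the full factor~$d$ (so $\pi_*(h)=d\,\overline{h}$), whereas the singular fibre $C_i$ is only $(d/g_i)$-fold covered (so $\pi_*(C_i)=(d/g_i)\overline{C}_i$). Keeping these distinct --- and not conflating $d$ with $d/g_i$ --- is precisely what produces the asymmetric formulas $\oalpha_i=\alpha_i/g_i$ and $\obeta_i=d\beta_i/g_i$. Orientations cause no trouble, since both $z\mapsto z^d$ on the regular fibre and the covering $C_i\to\overline{C}_i$ are orientation preserving.
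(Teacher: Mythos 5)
Your proof is correct and takes essentially the same route as the paper: there, too, the multiplicities $\oalpha_i=\alpha_i/\gcd(\alpha_i,d)$ are obtained from the isotropy/orbit-length count for the $\Z_d$-action, and the remaining invariants from the observation that the section over $B_0$ descends to the quotient (citing Jankins--Neumann, Proposition~2.5), after which the proposition is stated without further argument. Your homological bookkeeping ($\pi_*(h)=d\,\overline{h}$, $\pi_*(C_i)=(d/g_i)\overline{C}_i$, pushed through the relations $h\simeq\alpha_iC_i$ and $q_i\simeq-\beta_iC_i$) simply makes explicit the computation of the $\obeta_i$ that the paper delegates to that citation.
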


The quotient map $M\rightarrow\oM$ is a branched covering,
branched transversely to any singular fibre with
$\gcd(\alpha_i,d)>1$,
where the branching index is precisely this greatest common divisor.
\subsection{$d$-sections descend to the $\Z_d$-quotient}
\label{subsection:descend}
We now assume that $M$ admits a $d$-section, so that the conclusion
of Lemma~\ref{lem:d-section} holds. We permute the Seifert invariants
such that (for the appropriate $k\in\{0,\ldots,n\}$)
\begin{equation}
\label{eqn:alternative}
\begin{cases}
\alpha_i|(d\beta_i\mp 1)\;\text{and}\;\alpha_i\nmid d
        & \text{for $i=1,\ldots,k$},\\
\alpha_i|d
        & \text{for $i=k+1,\ldots,n$}.
\end{cases}
\end{equation}
The assumption $\alpha_i\nmid d$ in the first case of
\eqref{eqn:alternative} is not, strictly speaking, necessary.
It avoids the ambiguity where to place the $\alpha_i$ that equal~$1$,
but the formulas we shall derive also hold when we count some
of the $\alpha_i=1$ amongst the first alternative.
For $i=1,\ldots, k$, we write $d\beta_i=a_i\alpha_i+\varepsilon_i$
with $a_i\in\Z$ and $\varepsilon_i\in\{\pm 1\}$. Then
\[ \oM=M\bigl(g;(1,\ob),(\alpha_1,\varepsilon_1),\ldots,
(\alpha_k,\varepsilon_k)\bigr) \]
with
\[ \ob=\sum_{i=1}^k a_i+\sum_{i=k+1}^n \frac{d\beta_i}{\alpha_i}.\]
For $\alpha_i=2$ and $d$ odd, or for $\alpha_i=1$ counted amongst
the first alternative, there is an ambiguity in the choice of
$\varepsilon_i$ (and the corresponding~$a_i$), but this does
not invalidate any of our statements.

With Theorem~\ref{thm:1-section} we see that
$\oM$ always admits a $1$-section, and a positive one
precisely when $\ob\geq 0$ and $\varepsilon_i=1$ for $i=1,\ldots,k$.
The Euler number $\ole$ of $\oM\rightarrow B$ equals
\[ \ole=-\ob-\sum_{i=1}^k\varepsilon_i/\alpha_i, \]
so the condition
for $\oM$ to admit a positive $1$-section can be rephrased as:
all $\varepsilon_i$ equal $1$ and $\ole\leq-\sum_{i=1}^k1/\alpha_i$.

In fact, by the same topological arguments as
in \cite[Proposition~3.2]{agz} one can show that any $d$-section of
$M$ can be isotoped to one that is invariant under the $\Z_d$-action,
and it then descends to a $1$-section of~$\oM$.
Conversely, any $1$-section of $\oM$
lifts to a $\Z_d$-invariant $d$-section of~$M$. Also,
a $d$-section of $M$, if it exists,
is determined up to isotopy by its boundary orientations.
For simplicity, we shall restrict the discussion of the
topology of these $d$-sections to positive ones.
\subsection{The local behaviour near singular fibres}
To gain a better understanding of $d$-sections, we briefly describe
their behaviour near singular fibres. We write $(\alpha,\beta)$
for the invariants of the fibre $C$ in question.

If $\alpha|(d\beta\pm 1)$ and $\alpha\nmid d$, then
$C$ is a boundary component of the $d$-section. Near $C$,
the $d$-section looks like a helicoidal surface, where
the number of turns around $C$ is controlled by the condition
that any neighbouring regular fibre cuts the surface in $d$ (positive)
points.

If $\alpha|d$, then the $d$-section near $C$ consists of
$d/\alpha$ transverse discs to~$C$.

The quotient map $M\rightarrow\oM$ restricts on any $\Z_d$-invariant
$d$-section $\Sigma$ of $M$ to a branched covering $\Sigma\rightarrow
\oSigma$, with $\oSigma$ a $1$-section of~$\oM$. Each
singular fibre with $\alpha_i|d$ gives rise to $d/\alpha_i$
branch points of index $\alpha_i$ over a single point in the
interior of~$\oSigma$. Along the boundary components,
regular or singular, the covering $\Sigma\rightarrow\oSigma$
is unbranched.
\subsection{The topology of positive $d$-sections}
Here is our main theorem concerning positive $d$-sections.

\begin{thm}
\label{thm:d-section}
The Seifert manifold $M=M\bigl(g;(\alpha_1,\beta_1),\ldots,
(\alpha_n,\beta_n)\bigr)$ admits a positive $d$-section if and only
if, for some $k\in\{0,\ldots,n\}$ (and after permuting the
Seifert invariants), condition \eqref{eqn:alternative}
is satisfied with the sign $\alpha_i|(d\beta_i-1)$ in
the first alternative, and the Euler number $e$ satisfies
\begin{equation}
\label{eqn:ob}
de+\sum_{\alpha_i\nmid d}\frac{1}{\alpha_i}\leq 0.
\end{equation}
This positive $d$-section, if it exists, is unique up to isotopy.
Unless $\alpha_i|d$ for all $i$ and $e=0$, it is a connected, orientable
surface of genus
\[ \frac{1}{2}\left(2-
d\Bigl(2-2g+(d-1)e+\sum_{i=1}^n\frac{1}{\alpha_i}-n\Bigr)
-\sum_{\alpha_i\nmid d}\Bigl(1-\frac{1}{\alpha_i}\Bigr) \right)\]
with
\[ -de+\sum_{\alpha_i\nmid d}\Bigl(1-\frac{1}{\alpha_i}\Bigr) \]
boundary components.
\end{thm}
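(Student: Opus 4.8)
The plan is to establish the three assertions of the theorem --- the existence criterion, the uniqueness, and the genus/boundary count --- by reducing everything to the $\Z_d$-quotient $\oM$ already analysed in Section~\ref{subsection:descend}, and then running a Riemann--Hurwitz argument through the branched covering $\Sigma\rightarrow\oSigma$. For the existence criterion, I would first observe that by the results of Section~\ref{subsection:descend}, a positive $d$-section of $M$ exists if and only if $\oM$ admits a positive $1$-section, and Theorem~\ref{thm:1-section} together with the remarks there characterises the latter as: all $\varepsilon_i=1$ (equivalently, the sign in $\alpha_i|(d\beta_i-1)$) and $\ob\geq 0$. It then remains to translate the inequality $\ob\geq 0$ back into a condition on the Euler number $e$ of $M$. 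Using $\ob=\sum_{i=1}^k a_i+\sum_{i=k+1}^n d\beta_i/\alpha_i$ with $d\beta_i=a_i\alpha_i+\varepsilon_i$, and the definition $e=-\sum_i\beta_i/\alpha_i$, a direct computation should rewrite $\ob\geq 0$ as \eqref{eqn:ob}; the sum $\sum_{\alpha_i\nmid d}1/\alpha_i$ is exactly $\sum_{i=1}^k\varepsilon_i/\alpha_i$ with all $\varepsilon_i=1$.

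For the boundary count, I would use the local analysis of Section~\ref{subsection:descend}: the covering $\Sigma\rightarrow\oSigma$ is unbranched along the boundary, and $\oSigma$ is a $1$-section of $\oM$, so its boundary components are exactly the singular fibres with $\alpha_i\nmid d$ (there are $k$ of these) together with the $\ob$ regular boundary fibres coming from the $(1,\ob)$ term. Each singular boundary fibre of $\oM$ of type $(\alpha_i,1)$ lifts, and the number of its preimages in $\Sigma$ is governed by the helicoidal local model: a neighbouring regular fibre meets $\Sigma$ in $d$ points, so the boundary circle upstairs wraps and splits into $\gcd$-many components. I expect each such boundary component of $\oSigma$ to lift to $d/\alpha_i$ components, and each regular boundary component of $\oSigma$ to lift to $d$ components, yielding $\sum_{i=1}^k d/\alpha_i+d\,\ob$ boundary circles; substituting the expression for $\ob$ and simplifying should give $-de+\sum_{\alpha_i\nmid d}(1-1/\alpha_i)$ as claimed.

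The genus then follows from Riemann--Hurwitz applied to $\Sigma\rightarrow\oSigma$. First I would compute the Euler characteristic of $\oSigma$: as a $1$-section of $\oM=M(g;(1,\ob),(\alpha_1,1),\ldots,(\alpha_k,1))$, it is a surface over $B$ with $k+\ob$ boundary components (coming from the singular and regular boundary fibres), so $\chi(\oSigma)=2-2g-(k+\ob)$. The covering $\Sigma\rightarrow\oSigma$ has degree $d$ and, per Section~\ref{subsection:descend}, is branched precisely over $\sum_{i=k+1}^n d/\alpha_i$ interior points, each of branching index $\alpha_i$ (contributing $(\alpha_i-1)$ to the ramification at each of its $d/\alpha_i$ preimages). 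Riemann--Hurwitz gives $\chi(\Sigma)=d\,\chi(\oSigma)-\sum_{i=k+1}^n(d/\alpha_i)(\alpha_i-1)=d\,\chi(\oSigma)-\sum_{\alpha_i|d}(d-d/\alpha_i)$. From $\chi(\Sigma)=2-2\cdot\mathrm{genus}-(\text{number of boundary components})$ I would solve for the genus, substitute the boundary count computed above, eliminate $\ob$ and $k$ in favour of $e$ and the $\alpha_i$, and match the stated formula $\tfrac12(2-d(2-2g+(d-1)e+\sum 1/\alpha_i-n)-\sum_{\alpha_i\nmid d}(1-1/\alpha_i))$.

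The hard part will be the bookkeeping that folds the two index sets --- $\{\alpha_i\mid d\}$ and $\{\alpha_i\nmid d\}$ --- back into the single uniform sum $\sum_{i=1}^n 1/\alpha_i$ appearing in the final genus formula, while correctly tracking the Euler-number contributions. In particular one must verify that the combination $d(d-1)e$ emerges from the interplay between $\ob$ (which carries one factor of $d$ through $d\beta_i$) and the branching term $\sum_{\alpha_i|d}(d-d/\alpha_i)$, using Proposition~\ref{prop:Zd-quotient}'s relation $\ole=d\,e$ as a consistency check. I would also keep an eye on the degenerate case flagged in the statement --- $\alpha_i|d$ for all $i$ and $e=0$ --- where $\oSigma$ is closed and the quotient argument produces a disjoint union of closed surfaces rather than a single connected bounded one; this case must be excluded for the connectedness and genus claims, and I would note that the uniqueness-up-to-isotopy assertion follows directly from the corresponding uniqueness for $1$-sections in Theorem~\ref{thm:1-section} via the invariant-lift correspondence of Section~\ref{subsection:descend}.
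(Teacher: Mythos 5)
Your overall route --- reduce to the $\Z_d$-quotient, invoke Theorem~\ref{thm:1-section} for existence and uniqueness, and compute the genus via Riemann--Hurwitz for the branched covering $\Sigma\rightarrow\oSigma$ --- is exactly the paper's, and your treatment of the existence criterion (rewriting $\ob\geq 0$ as \eqref{eqn:ob}), the uniqueness, the interior ramification term $\sum_{i=k+1}^n(d/\alpha_i)(\alpha_i-1)$, and the degenerate closed case are all correct. But there is a genuine error in your count of boundary components. You claim that each regular boundary circle of $\oSigma$ lifts to $d$ boundary circles of $\Sigma$, and each singular one of type $(\alpha_i,1)$ to $d/\alpha_i$ circles. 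This cannot be right: for $i\leq k$ we have $\alpha_i\nmid d$, so $d/\alpha_i$ is not even an integer. The correct statement is that the boundary components of $\Sigma$ and $\oSigma$ are in \emph{bijection}: the boundary of $\Sigma$ consists of whole fibres of $M$, every fibre of $M$ is a single $S^1$-orbit and hence $\Z_d$-invariant, and the preimage in $M$ of a fibre of $\oM=M/\Z_d$ is a single fibre of $M$. Thus each boundary circle of $\Sigma$ is a \emph{connected} $d$-fold cover of the corresponding boundary circle of $\oSigma$ (this is what ``unbranched along the boundary'' amounts to here), and $\#\partial\Sigma=\ob+k=-de+\sum_{\alpha_i\nmid d}(1-1/\alpha_i)$.

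Your count is not just a failure of simplification; it is quantitatively wrong. Using $\ob=-de-\sum_{i=1}^k 1/\alpha_i$, your expression $\sum_{i=1}^k d/\alpha_i+d\,\ob$ collapses to $-d^2e$: for the Hopf fibration ($e=-1$) it predicts $d^2$ boundary components instead of the correct~$d$. Since you then extract the genus by combining $\chi(\Sigma)$ (which you do compute correctly via the bounded Riemann--Hurwitz formula) with this boundary count, your genus formula would come out wrong as well. The same error undercuts connectedness, which you leave unaddressed in the non-degenerate case: in your picture, a covering with no interior branching (e.g.\ $d=2$ for the Hopf flow) could be $d$ disjoint copies of $\oSigma$, whereas in the correct picture a boundary circle of $\Sigma$ maps with degree $d$ onto a single boundary circle of $\oSigma$, forcing the component of $\Sigma$ containing it to exhaust the full covering degree, so $\Sigma$ is connected whenever $\partial\Sigma\neq\emptyset$. (The paper encodes all of this by capping off the $\ob+k$ boundary circles on both sides with discs; each disc downstairs then carries exactly one branch point of index $d$ upstairs --- precisely the $d$-fold wrapping of boundary circles --- and the Riemann--Hurwitz computation for the closed surfaces $\hSigma\rightarrow\hat{\oSigma}$ gives the stated genus.)
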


\begin{proof}
The left-hand side of inequality \eqref{eqn:ob}
equals the integer $-\ob$, so the inequality is simply the reformulation
of the condition $\ob\geq 0$. The sign condition on the first
alternative of \eqref{eqn:alternative} is equivalent to
requiring $\varepsilon_1=\ldots=\varepsilon_k=1$.

The number of boundary components of the positive $d$-section $\Sigma$ equals
that of the $1$-section $\oSigma$ of~$\oM$, which by
Theorem~\ref{thm:1-section} is
\[ \ob+k=-de+\sum_{\alpha_i\nmid d}\Bigl(1-\frac{1}{\alpha_i}\Bigr).\]
If $\Sigma$ has empty boundary, which happens when $e=0$ and $\alpha_i|d$
for all $i=1,\ldots,n$, it may be disconnected, depending on
the branching of the map $\Sigma\rightarrow\oSigma$; an obvious case
is a trivial $S^1$-bundle $M=B\times S^1\rightarrow B$. When the
boundary is non-empty, $\Sigma$ is connected by the argument in
the proof of \cite[Theorem~4.1]{agz}. For the remainder of the proof
we assume this latter case.

Write $\hat{\Sigma}$ for the closed surface obtained by gluing $\ob+k$
copies of $D^2$ to the $d$-section $\Sigma$ of~$M$ along its boundary
components, and $\hat{\oSigma}$ for the closed surface obtained in the same
way from the $1$-section $\oSigma$ of~$\oM$.
This gluing is done abstractly, that is, the surfaces
$\hat{\Sigma}$ and $\hat{\oSigma}$ are no longer embedded surfaces
in $M$ or $\oM$, respectively. The Euler characteristic
of $\hat{\oSigma}$ equals $2-2g$, since the interior of the $1$-section
is a copy of the base $B$ with $\ob+k$ points removed.

The surface $\hSigma$ is a branched cover of $\hat{\oSigma}$ as
follows. Each of the discs glued to one
of the $\ob+k$ boundary components gives rise
to a single branch point in $\hSigma$ of index~$d$, since both
a regular fibre in $M$ as well as a singular fibre with $\alpha_i$
coprime to $d$ is a $d$-fold cover of the corresponding fibre in~$\oM$.

Each singular fibre of $M$ with $\alpha_i|d$
gives rise to a point in $\hat{\oSigma}$ covered by $d/\alpha_i$ points
of branching index~$\alpha_i$. This statement
about covering points and indices remains true for $\alpha_i=1$.

Thus, the Riemann--Hurwitz formula for the branched covering $\hSigma
\rightarrow\hat{\oSigma}$ gives
\[ \chi(\hSigma)=d\bigl(2-2g-(\ob+k)-(n-k)\bigr)+(\ob+k)+
\sum_{i=k+1}^n\frac{d}{\alpha_i}.\]
The formula for the genus $(2-\chi(\hSigma))/2$ of the
$d$-section follows with some simple arithmetic. Notice that the
formula is invariant under adding or removing pairs $(1,0)$
from the Seifert invariants.
\end{proof}
\subsection{Examples}
Here are some elementary applications of Theorem~\ref{thm:d-section}
\subsubsection{The Hopf flow on $S^3$}
The Hopf fibration $S^3\rightarrow S^2$ has the description
$M\bigl(0;(1,1)\bigr)$ as a Seifert manifold, with $e=-1$, see
\cite[Section~2.4]{agz}. The following is \cite[Theorem~4.1]{agz}.

\begin{ex}
For any natural number $d$, the Hopf fibration admits
a unique positive $d$-section, which has genus $(d-1)(d-2)/2$
and $d$ boundary components.
\end{ex}
\subsubsection{Seifert fibrations of $S^3$}
\label{subsubsection:S3}
According to \cite[Proposition~5.2]{gela18}, a complete list
of the Seifert fibrations of $S^3$ is provided by
\[ M\bigl(0;(\alpha_1,\beta_1),(\alpha_2,\beta_2)\bigr),\]
where $\alpha_1\beta_2+\alpha_2\beta_1=1$ (in particular,
$\alpha_1$ and $\alpha_2$ are coprime). In order to avoid
duplications in this list, one may assume $\alpha_1\geq\alpha_2$
and $0\leq\beta_1<\alpha_1$. For our purposes, this is not relevant,
and so we shall not insist on this choice, since we prefer to
retain the symmetry in $\alpha_1$ and~$\alpha_2$.

The corresponding $S^1$-action on $S^3\subset\C^2$ is given by
\begin{equation}
\label{eqn:action21}
\tag{$A_{2,1}$}
\theta(z_1,z_2)=
\bigl(\rme^{\rmi\alpha_2\theta}z_1,\rme^{\rmi\alpha_1\theta}z_2\bigr),
\;\;\;\theta\in\R/2\pi\Z.
\end{equation}
Notice the choice of indices. The fibre $C_1=\{z_2=0\}\cap S^3$ through the
point $(1,0)$ has multiplicity~$\alpha_2$, the fibre
$C_2=\{z_1=0\}\cap S^3$ through $(0,1)$,
multiplicity~$\alpha_1$.
The Euler number of this fibration is
\[ e=-\frac{\beta_1}{\alpha_1}-\frac{\beta_2}{\alpha_2}=
-\frac{1}{\alpha_1\alpha_2}.\]
The Hopf fibration corresponds to the choice $\alpha_1=\alpha_2=1$.

We shall discuss these Seifert fibrations in more detail
in Section~\ref{section:S3}. The following example answers a question
posed to us by Christian Lange.

\begin{ex}
The Seifert fibration $M\bigl(0;(\alpha_1,\beta_1),(\alpha_2,\beta_2)\bigr)$
of $S^3$ admits a positive $\alpha_1\alpha_2$-section, with boundary a single
regular fibre, of genus $(\alpha_1-1)(\alpha_2-1)/2$.
\end{ex}

Here is one way how to describe this $\alpha_1\alpha_2$-section explicitly.
A more systematic treatment, using algebraic curves in
weighted projective spaces, will be given in Sections~\ref{section:S3}
and~\ref{section:algebraic}.
We replace the round $3$-sphere $S^3\subset\C^2$ by its diffeomorphic copy
\[ S^3_{\alpha_1,\alpha_2}:=
\bigl\{(z_1,z_2)\in\C^2\co|z_1|^{2\alpha_1}+|z_2|^{2\alpha_2}=1\bigr\},\]
The $S^1$-action is as in~\eqref{eqn:action21}, and we write
$C_1^{\alpha_1,\alpha_2}$, $C_2^{\alpha_1,\alpha_2}$ for the fibres
through the points $(1,0)$ and $(0,1)$, respectively.
Now consider the map
\begin{equation}
\label{eqn:rho}
\begin{array}{rccc}
\rho_{\alpha_1,\alpha_2}\co & S^3_{\alpha_1,\alpha_2} & \longrightarrow
    & S^3\\
                            & (z_1,z_2)               & \longmapsto
    & (z_1^{\alpha_1},z_2^{\alpha_2}).
\end{array}
\end{equation}
We have
\[ \rho_{\alpha_1,\alpha_2}\bigl(\rme^{\rmi\alpha_2\theta}z_1,
\rme^{\rmi\alpha_1\theta}z_2\bigr)=\bigl(\rme^{\rmi\alpha_1\alpha_2\theta}
z_1^{\alpha_1},\rme^{\rmi\alpha_1\alpha_2\theta}z_2^{\alpha_2}\bigr),\]
so each $S^1$-orbit on $S^3_{\alpha_1,\alpha_2}$ maps $\alpha_1\alpha_2:1$
to a Hopf orbit on $S^3$, except for the two exceptional orbits
$C_i^{\alpha_1,\alpha_2}$, which are $\alpha_i$-fold
coverings of the Hopf orbits $C_i$, $i=1,2$. So
$\rho_{\alpha_1,\alpha_2}$ is a covering branched along
(and transverse to) $C_1^{\alpha_1,\alpha_2}$, $C_2^{\alpha_1,\alpha_2}$
with branching index $\alpha_2,\alpha_1$, respectively.

\begin{note}
Points with the same image under $\rho_{\alpha_1,\alpha_2}$
do indeed come from the same $S^1$-orbit on $S^3_{\alpha_1,\alpha_2}$.
For instance, the point $\bigl(\rme^{2\pi\rmi/\alpha_1}z_1,z_2\bigr)$
lies on the same orbit as $(z_1,z_2)$, since there is a $k\in\Z$
with $k\alpha_2\equiv 1$ mod~$\alpha_1$, and then with
$\theta:=2\pi k/\alpha_1$ we have $\theta(z_1,z_2)=
\bigl(\rme^{2\pi\rmi/\alpha_1}z_1,z_2\bigr)$.
\end{note}

It follows that the quotient map $\pi_{2,1}\co S^3_{\alpha_1,\alpha_2}
\rightarrow S^3_{\alpha_1,\alpha_2}/S^1=\CP^1$ is given by
\[ \pi_{2,1}(z_1,z_2)=[z_1^{\alpha_1}:z_2^{\alpha_2}]\in\CP^1=S^2.\]
Observe that the preimage of a point $[w_1:w_2]\in\CP^1$, where
we may assume that $|w_1|^2+|w_2|^2=1$,
consists of the points $\bigl(\lambda_1(w_1)^{1/\alpha_1},
\lambda_2(w_2)^{1/\alpha_2}\bigr)\in S^3_{\alpha_1,\alpha_2}$
for some fixed choice
of roots and $\lambda_1,\lambda_2\in S^1\subset\C$ with
$\lambda_1^{\alpha_1}=\lambda_2^{\alpha_2}$, which means
we can write $\lambda_1=\rme^{\rmi\alpha_2\theta}$ and $\lambda_2=
\rme^{\rmi\alpha_1\theta}$ for a suitable $\theta\in\R/2\pi\Z$.

The element $\rme^{2\pi\rmi/\alpha_1\alpha_2}\in S^1$
defines a $\Z_{\alpha_1\alpha_2}$-action on $S^3_{\alpha_1,\alpha_2}$.
In other words, this is the action generated by
\[ (z_1,z_2)\longmapsto \bigl(\rme^{2\pi\rmi/\alpha_1}z_1,
\rme^{2\pi\rmi/\alpha_2}z_2\bigr).\]
By the Chinese remainder theorem, for $k\in\{0,1,\ldots, \alpha_1\alpha_2-1\}$
we obtain the $\alpha_1\alpha_2$ independent choices of roots
of unity $\bigl(\rme^{2\pi\rmi k/\alpha_1},\rme^{2\pi\rmi k/\alpha_2}\bigr)$,
so the map $\rho_{\alpha_1,\alpha_2}$ is in fact the quotient map
under this $\Z_{\alpha_1\alpha_2}$-action.

Summing up, we have the
following commutative diagram:

\begin{diagram}
S^3_{\alpha_1,\alpha_2}  &                                 &   \\
                         & \rdTo^{\rho_{\alpha_1,\alpha_2}} & \\
\dTo^{\pi_{2,1}}         &                                 & S^3=
                      S^3_{\alpha_1,\alpha_2}/\Z_{\alpha_1\alpha_2}\\
                         & \ldTo_{\pi_{\mathrm{Hopf}}}     & \\
\CP^1                    &                                 &
\end{diagram}

\begin{rem}
The quotient $S^3_{\alpha_1,\alpha_2}/\Z_{\alpha_1\alpha_2}$ can also
be recognised as $S^3$ with the Hopf fibration by
computing its Seifert invariants. By Proposition~\ref{prop:Zd-quotient}
we have
\begin{eqnarray*}
S^3_{\alpha_1,\alpha_2}/\Z_{\alpha_1\alpha_2}
& = & M\bigl(0;(1,\alpha_2\beta_1),(1,\alpha_1\beta_2)\bigr)\\
& = & M\bigl(0;(1,0),(1,\alpha_1\beta_2+\alpha_2\beta_1)\bigr)\\
& = & M\bigl(0;(1,0),(1,1)\bigr),
\end{eqnarray*}
which describes the diagonal action of $S^1$ on $S^3\subset\C^2$.
\end{rem}

For the Hopf fibration, it is easy to write down a disc-like
$1$-section, where the boundary fibre sits over the point $[1:1]\in\CP^1$.
Lifting this to $S^3_{\alpha_1,\alpha_2}$, one finds an
$\alpha_1\alpha_2$-section as the image of the $\alpha_1\alpha_2$-valued map
\[ \C\ni z\longmapsto
\left(\Bigl(\frac{z-1}{\sqrt{|z-1|^2+|z|^2}}\Bigr)^{1/\alpha_1},
\Bigl(\frac{z}{\sqrt{|z-1|^2+|z|^2}}\Bigr)^{1/\alpha_2}\right)
\in S^3_{\alpha_1,\alpha_2}.\]
As $z\rightarrow\infty$, this becomes asymptotic to
the (regular) fibre over $[1:1]\in\CP^1$. For $z=0$ the map
is $\alpha_1$-valued. This corresponds to the $\alpha_1$
points over $[1:0]$ of branching index $\alpha_2$. Similarly,
the point $z=1$ maps to the $\alpha_2$ branch points of index $\alpha_1$.
\subsubsection{Existence of $d$-sections}
As a corollary of Theorem~\ref{thm:d-section} and the discussion
in Section~\ref{subsection:descend} we mention the following
existence statement, which is in response to a question by
Umberto Hryniewicz.

\begin{cor}
Every Seifert manifold admits a $d$-section for a suitable
positive integer~$d$. It admits a positive $d$-section
if and only if its Euler number is non-positive.
\end{cor}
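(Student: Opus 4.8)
The plan is to harvest both assertions directly from Theorem~\ref{thm:d-section} and the descent picture of Section~\ref{subsection:descend}, the guiding observation being that choosing $d$ divisible by every multiplicity $\alpha_i$ makes all the divisibility bookkeeping collapse. So the corollary should reduce to a clean substitution into the formulas already established.

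For the first assertion I would set $d=\alpha_1\cdots\alpha_n$ (any common multiple of the $\alpha_i$ serves equally well). Then $\gcd(\alpha_i,d)=\alpha_i$ for every $i$, so by Proposition~\ref{prop:Zd-quotient} the quotient $\oM=M/\Z_d$ has all $\oalpha_i=1$; that is, $\oM=M\bigl(g;(1,\ob)\bigr)$ carries no singular fibres. Such a circle bundle admits a $1$-section by Theorem~\ref{thm:1-section} with $n=0$, writing $\ob=b_+-b_-$ with non-negative integers $b_\pm$. By the lifting statement of Section~\ref{subsection:descend}, this $1$-section lifts to a $\Z_d$-invariant $d$-section of~$M$, proving that every Seifert manifold admits a $d$-section for this~$d$.

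For the equivalence I would again invoke Theorem~\ref{thm:d-section}. If $M$ admits a positive $d$-section for some $d$, then for that $d$ inequality~\eqref{eqn:ob} holds; since $\sum_{\alpha_i\nmid d}1/\alpha_i\geq 0$, this forces $de\leq 0$, and as $d>0$ we conclude $e\leq 0$. Conversely, assume $e\leq 0$ and take $d=\alpha_1\cdots\alpha_n$ once more. Now $\alpha_i\mid d$ for all $i$, so in~\eqref{eqn:alternative} we may take $k=0$: the first alternative is empty, the required sign condition $\alpha_i\mid(d\beta_i-1)$ is vacuous, and the correction sum in~\eqref{eqn:ob} disappears, leaving $de\leq 0$, which holds. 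Theorem~\ref{thm:d-section} then yields the desired positive $d$-section.

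The argument is essentially complete once these substitutions are made, so there is no serious obstacle; the only point demanding care is the bookkeeping in~\eqref{eqn:ob}, namely that the correction term $\sum_{\alpha_i\nmid d}1/\alpha_i$ is non-negative (driving the forward implication) and vanishes exactly when every $\alpha_i$ divides~$d$ (enabling the converse). I would also flag the borderline case $e=0$ with all $\alpha_i\mid d$, where Theorem~\ref{thm:d-section} permits the resulting positive $d$-section to be closed and possibly disconnected; this affects neither existence nor the value of~$e$, and hence causes no difficulty for either assertion.
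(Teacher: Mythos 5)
Your proof is correct and follows essentially the same route as the paper's: choosing $d$ as a common multiple of the $\alpha_i$ so that the $\Z_d$-quotient is $M\bigl(g;(1,\ob)\bigr)$, lifting its $1$-section, and then reading the positivity criterion off inequality~\eqref{eqn:ob}, which reduces to $de\leq 0$ for this $d$ and fails for every $d$ when $e>0$. Your extra care about the vacuous sign condition when $k=0$ and the closed-surface borderline case $e=0$ only makes explicit what the paper leaves implicit.
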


\begin{proof}
For the Seifert manifold $M=M\bigl(g;(\alpha_1,\beta_1),\ldots,
(\alpha_n,\beta_n)\bigr)$ we may choose $d$ as a common multiple
of the~$\alpha_i$. Then the $\Z_d$-quotient is of the form
$M\bigl(g;(1,\ob)\bigr)$. This admits a $1$-section, which can be
lifted to a $d$-section of~$M$.

For this choice of~$d$, condition \eqref{eqn:ob} will be satisfied
if and only if $e\leq 0$. If $e$ is positive, \eqref{eqn:ob}
will not be satisfied for any choice of~$d$.
\end{proof}

\begin{rem}
Changing the orientation of $M$ amounts to changing the
sign of the Euler number. So a Seifert manifold always admits
a positive $d$-section for at least one of the two orientations.
If $e\neq 0$, the Seifert fibration can be realised as a Besse
Reeb flow~\cite{kela}.
\end{rem}
\section{Weighted projective planes}
In this section we recall the bare essentials of weighted projective planes
that we shall need to give explicit realisations of surfaces of
section for the Seifert fibrations of~$S^3$. For more details
see~\cite{hosg20}.
\subsection{The definition of weighted projective planes}
Let $a_0,a_1,a_2\in\N$ be a triple of pairwise coprime positive integers.
The quotient of $\C^3\setminus\{(0,0,0)\}$ under the $\C^*$-action given by
\[ \lambda(z_0,z_1,z_2)=(\lambda^{a_0}z_0,\lambda^{a_1}z_1,\lambda^{a_2}z_2)\]
is called the (well-formed) \textbf{weighted complex projective
plane} $\bbP(a_0,a_1,a_2)$ with \textbf{weights} $a_0,a_1,a_2$.
The equivalence class of a point $(z_0,z_1,z_2)$ is written
as $[z_0:z_1:z_2]_{\bfa}$, where $\bfa=(a_0,a_1,a_2)$.

The space $\bbP(a_0,a_1,a_2)$ is a complex manifold of complex dimension~$2$,
except for three cyclic singularities at the points
$[1:0:0]_{\bfa}$, $[0:1:0]_{\bfa}$ and $[0:0:1]_{\bfa}$.
Indeed, the subset
\[ U_0:=\bigl\{[z_0:z_1:z_2]_{\bfa}\co z_0\neq 0\bigr\}\]
can be identified with the quotient $\C^2/\Z_{a_0}$ under the
action of the cyclic group $\Z_{a_0}< S^1\subset\C$ given by
\[ \zeta(w_1,w_2)=(\zeta^{a_1}w_1,\zeta^{a_2}w_2),\;\;\;\zeta\in\Z_{a_0}.\]
If we write $[w_1,w_2]_{a_0}$ for the equivalence class
of $(w_1,w_2)$ in this cyclic quotient, the claimed identification
is given by
\[ \begin{array}{rcl}
U_0                    & \longrightarrow & \C^2/\Z_{a_0}\\
{[z_0:z_1:z_2]}_{\bfa} & \longmapsto     & [z_0^{-a_1/a_0}z_1,
                                          z_0^{-a_2/a_0}z_2]_{a_0}.
\end{array} \]
The ambiguity in the choice of a root of order $a_0$ is accounted
for by the equivalence relation on the right. This map is
well defined, with inverse
\[ [w_1,w_2]_{a_0}\longmapsto [1:w_1:w_2]_{\bfa}. \]
There are analogous descriptions for the subsets $U_i=\{z_i\neq 0\}
\subset\bbP(a_0,a_1,a_2)$ for $i=1,2$.
\subsection{Algebraic curves in weighted projective planes}
\label{subsection:alg-curves}
A complex polynomial $F$ in the variables $z_0,z_1,z_2$
is called a \textbf{degree}-$d$ \textbf{weighted-homogeneous polynomial}
(with a choice of $a_0,a_1,a_2$ understood) if
\[ F(\lambda^{a_0}z_0,\lambda^{a_1}z_1,\lambda^{a_2}z_2)=\lambda^d
F(z_0,z_1,z_2)\]
for some $d\in\N$ and all $(z_0,z_1,z_2)\in\C^3$ and $\lambda\in\C^*$.
Then the zero set $\{F=0\}$ is a well-defined subset of
$\bbP(a_0,a_1,a_2)$.

The polynomial $F$ is called \textbf{non-singular} if
there are \emph{no} solutions
to the system of equations
\[ F=\frac{\partial F}{\partial z_0}=\frac{\partial F}{\partial z_1}=
\frac{\partial F}{\partial z_2}=0\]
in $\C^3\setminus\{(0,0,0)\}$.

\begin{prop}
If $F$ is non-singular, then $\{F=0\}\subset\bbP(a_0,a_1,a_2)$
is a smooth Riemann surface.
\end{prop}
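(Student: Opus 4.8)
The plan is to work in the three affine charts $U_i=\{z_i\neq 0\}$ that cover $\bbP(a_0,a_1,a_2)$ and to verify that $\{F=0\}$ is a smooth Riemann surface in each of them. By the symmetry of the three coordinates it suffices to treat the chart $U_0\cong\C^2/\Z_{a_0}$; the cyclic quotient singularity of $\bbP(a_0,a_1,a_2)$ contained in $U_0$ is the point $[1:0:0]_{\bfa}$, and away from it $U_0$ is an honest complex $2$-manifold on which the smoothness of $\{F=0\}$ is the classical statement that the zero set of a holomorphic function with nonvanishing differential is a smooth hypersurface. The argument below handles the whole chart uniformly.

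Recall that $U_0\cong\C^2/\Z_{a_0}$ via the quotient map $(w_1,w_2)\mapsto[1:w_1:w_2]_{\bfa}$, with the $\Z_{a_0}$-action $\zeta\cdot(w_1,w_2)=(\zeta^{a_1}w_1,\zeta^{a_2}w_2)$. The preimage of $\{F=0\}\cap U_0$ under this quotient map is the affine curve $\tC_0=\{f_0=0\}\subset\C^2$, where $f_0(w_1,w_2):=F(1,w_1,w_2)$. Two things must be shown: that $\tC_0$ is smooth, and that its $\Z_{a_0}$-quotient is a Riemann surface.

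For the smoothness of $\tC_0$, the first step is to differentiate the weighted-homogeneity relation $F(\lambda^{a_0}z_0,\lambda^{a_1}z_1,\lambda^{a_2}z_2)=\lambda^dF(z_0,z_1,z_2)$ in $\lambda$ at $\lambda=1$, which yields the Euler identity
\[ a_0z_0\frac{\partial F}{\partial z_0}+a_1z_1\frac{\partial F}{\partial z_1}+a_2z_2\frac{\partial F}{\partial z_2}=dF. \]
Now suppose $\tC_0$ failed to be smooth at $(w_1,w_2)$, i.e.\ $f_0=\partial f_0/\partial w_1=\partial f_0/\partial w_2=0$ there. Since $\partial f_0/\partial w_j=(\partial F/\partial z_j)(1,w_1,w_2)$ for $j=1,2$, this says that $F$ and its two partials $\partial F/\partial z_1,\partial F/\partial z_2$ vanish at the point $(1,w_1,w_2)\in\C^3$. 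Evaluating the Euler identity there forces $a_0\,\partial F/\partial z_0=0$, hence $\partial F/\partial z_0=0$ as well. Thus all of $F,\partial F/\partial z_0,\partial F/\partial z_1,\partial F/\partial z_2$ vanish at $(1,w_1,w_2)\neq(0,0,0)$, contradicting the non-singularity of~$F$. Hence $\tC_0$ is a smooth affine curve, that is, a Riemann surface.

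Finally, the weighted-homogeneity relation with $\lambda=\zeta$ a root of unity of order dividing $a_0$ gives $f_0(\zeta^{a_1}w_1,\zeta^{a_2}w_2)=\zeta^df_0(w_1,w_2)$, so the zero set $\tC_0$ is preserved by the $\Z_{a_0}$-action; the finite cyclic group therefore acts holomorphically on the Riemann surface $\tC_0$. The quotient of a Riemann surface by a finite group of holomorphic automorphisms is again a Riemann surface --- near a point with stabiliser $\Z_m$ one linearises the action to $t\mapsto\rme^{2\pi\rmi/m}t$, whereupon $t^m$ serves as a holomorphic coordinate on the quotient --- so $\{F=0\}\cap U_0$ is a smooth Riemann surface. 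The same argument applies verbatim in $U_1$ and~$U_2$; as these charts cover $\bbP(a_0,a_1,a_2)$ and their complex structures agree on the (singularity-free) overlaps, the curve $\{F=0\}$ is a smooth Riemann surface. I expect the crux to be the smoothness of $\tC_0$: the non-singularity hypothesis involves all three partials of~$F$, whereas in the chart only two of them are directly visible, and it is precisely the Euler identity that recovers the missing third derivative and closes the gap.
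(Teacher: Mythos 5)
Your proof is correct and takes essentially the same route as the paper's: the paper's (very terse) proof invokes the standard Euler-identity argument for the affine charts, citing Kirwan, and handles a curve through a cyclic singular point via the holomorphic chart $z\mapsto z^{a_i}$, which is precisely your linearise-and-use-$t^m$ step. Your write-up simply makes explicit what the paper leaves implicit, namely the pullback of the curve to the $\Z_{a_0}$-cover $\C^2$ of the chart $U_0$ and the descent of its complex structure to the quotient.
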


\begin{proof}
The proof is completely analogous to the usual proof for
algebraic curves in $\CP^2=\bbP(1,1,1)$, which can be found
in~\cite{kirw92}, for instance. The only additional issue
is to deal with curves that pass through one of the (at most)
three singular points of $\bbP(a_0,a_1,a_2)$. However, this does not
pose a problem, since these singularities are cyclic.
A holomorphic chart around $0\in\C/\Z_{a_i}$ is given by $z\mapsto z^{a_i}$,
so on the complex curve $\{F=0\}$ these points are non-singular.
\end{proof}

\section{Seifert fibrations of the $3$-sphere}
\label{section:S3}
We now give a systematic and comprehensive description of the
positive $d$-sections for all Seifert fibrations of~$S^3$.
Weighted projective lines appear naturally as
the base of these fibrations. We first give a description of
global surfaces of section using the topological arguments
from Section~\ref{section:d}. A description in terms
of algebraic curves in weighted projective planes will
follow in Section~\ref{section:algebraic}.
\subsection{An alternative relation with the Hopf fibration}
\label{subsection:alternative}
As we shall see present\-ly, it is convenient to replace
the $S^1$-action \eqref{eqn:action21} by
\begin{equation}
\label{eqn:action12}
\tag{$A_{1,2}$}
\theta(z_1,z_2)=
\bigl(\rme^{\rmi\alpha_1\theta}z_1,\rme^{\rmi\alpha_2\theta}z_2\bigr),
\;\;\;\theta\in\R/2\pi\Z.
\end{equation}
As before, $\alpha_1$, $\alpha_2$ may be any pair of coprime natural numbers,
and the $S^1$-actions \eqref{eqn:action12} constitute a complete list
of the Seifert fibrations of the $3$-sphere. In contrast with
Section~\ref{subsubsection:S3}, we now regard these as actions
on the round~$S^3$.

Nonetheless, the spheres $S^3_{\alpha_1,\alpha_2}$
again have to play their part, and as before we consider the map
$\rho_{\alpha_1,\alpha_2}$ defined in~\eqref{eqn:rho}.
We now compute
\[ \rho_{\alpha_1,\alpha_2}\bigl(\rme^{\rmi\theta}z_1,
\rme^{\rmi\theta}z_2\bigr)=\bigl(\rme^{\rmi\alpha_1\theta}z_1^{\alpha_1},
\rme^{\rmi\alpha_2\theta}z_2^{\alpha_2}\bigr),\]
which tells us that each Hopf $S^1$-orbit on $S^3_{\alpha_1,\alpha_2}$
is mapped $1:1$ to an orbit of \eqref{eqn:action12} on~$S^3$,
except for the Hopf orbits $C_i^{\alpha_1,\alpha_2}$,
which are $\alpha_i$-fold coverings of the exceptional
$A_{1,2}$-orbits $C_i$, $i=1,2$. Each regular $A_{1,2}$-orbit
is covered by $\alpha_1\alpha_2$ Hopf orbits.

\begin{rem}
Beware that in the set-up of Section~\ref{subsubsection:S3},
the orbits $C_1^{\alpha_1,\alpha_2}, C_2^{\alpha_1,\alpha_2}$
were of multiplicity $\alpha_2,\alpha_1$, respectively, whereas
here the orbits $C_1,C_2$ are of multiplicity $\alpha_1,\alpha_2$,
respectively.
\end{rem}

Observe that the quotient $S^3/S^1$ under the $A_{1,2}$-action can
be naturally identified with the weighted projective line
$\bbP(\alpha_1,\alpha_2)$, which is topologically a $2$-sphere,
and metrically an orbifold with an $\alpha_1$- and an $\alpha_2$-singularity.
The quotient map $\pi_{1,2}\co S^3\rightarrow S^3/S^1=\bbP(\alpha_1,\alpha_2)$
is given by
\[ \pi_{1,2}(z_1,z_2)=[z_1:z_2]_{(\alpha_1,\alpha_2)}
\in \bbP(\alpha_1,\alpha_2).\]

In conclusion, these maps fit together into the following
commutative diagram, where $\orho_{\alpha_1,\alpha_2}$
sends $[z_1:z_2]\in\CP^1$ to
$[z_1^{\alpha_1}:z_2^{\alpha_2}]_{(\alpha_1,\alpha_2)}\in
\bbP(\alpha_1,\alpha_2)$:
\begin{diagram}
S^3_{\alpha_1,\alpha_2}    & \rTo^{\rho_{\alpha_1,\alpha_2}}
     && S^3=S^3_{\alpha_1,\alpha_2}/\Z_{\alpha_1\alpha_2}\\
\dTo^{\pi_{\mathrm{Hopf}}} &
     && \dTo_{\pi_{1,2}}\\
\CP^1                      & \rTo^{\orho_{\alpha_1,\alpha_2}}
     && \bbP(\alpha_1,\alpha_2)
\end{diagram}

\begin{rem}
Contrast this diagram with the one in Section~\ref{subsubsection:S3}.
Whereas the earlier diagram was used to lift a $1$-section of
$\pi_{\mathrm{Hopf}}$ to an $\alpha_1\alpha_2$-section of~$\pi_{2,1}$,
the diagram here will allow us to lift $d$-sections of $\pi_{1,2}$,
for all admissible~$d$, to $d$-sections of~$\pi_{\mathrm{Hopf}}$.
\end{rem}
\subsection{Positive $d$-sections}
Before we discuss the lifting of $d$-sections, we are
going to derive the classification of the positive $d$-sections for
the Seifert fibrations of $S^3$ as a corollary of Theorem~\ref{thm:d-section}.

\begin{cor}
\label{cor:S3}
Table~\ref{table:S3} constitutes a complete list of the positive
$d$-sections $\Sigma$ of the Seifert
fibrations of $S^3$ defined by~\eqref{eqn:action12}.
\end{cor}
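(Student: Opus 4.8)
The plan is to specialise Theorem~\ref{thm:d-section} to the concrete Seifert data of the fibration~\eqref{eqn:action12} and then read off the possible shapes of a positive $d$-section from the divisibility type of~$d$. First I would record the relevant invariants: by Section~\ref{subsubsection:S3}, together with the $\alpha_1\leftrightarrow\alpha_2$ swap between the two actions, the $A_{1,2}$-action is the Seifert manifold $M\bigl(0;(\alpha_1,\beta_1),(\alpha_2,\beta_2)\bigr)$ with $g=0$, $n=2$, coprimality relation $\alpha_2\beta_1+\alpha_1\beta_2=1$, and Euler number $e=-1/(\alpha_1\alpha_2)$. In particular $e<0$, so the exceptional case of \eqref{eqn:ob} with $e=0$ never arises, and every positive $d$-section is a connected surface with non-empty boundary, to which the genus and boundary-count formulas of Theorem~\ref{thm:d-section} apply directly.

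Next I would translate the abstract existence conditions into explicit congruences on~$d$, eliminating the $\beta_i$ by means of $\alpha_2\beta_1+\alpha_1\beta_2=1$. The fibre $C_i$, of multiplicity $\alpha_i$, is an interior fibre precisely when $\alpha_i\mid d$; otherwise the positive-boundary condition $\alpha_i\mid(d\beta_i-1)$ reduces, using $\beta_1\equiv\alpha_2^{-1}\pmod{\alpha_1}$ and $\beta_2\equiv\alpha_1^{-1}\pmod{\alpha_2}$, to the clean congruence $d\equiv\alpha_{3-i}\pmod{\alpha_i}$. This splits the analysis into four mutually exclusive cases according to whether each of $\alpha_1,\alpha_2$ divides~$d$, and in each case the congruences, together with the inequality \eqref{eqn:ob} (which becomes a lower bound of the form $d\geq\alpha_1$, $d\geq\alpha_2$, or $d\geq\alpha_1+\alpha_2$ depending on how many singular fibres are boundary fibres), pin down exactly which $d$ admit a positive section.

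It then remains to substitute $g=0$, $n=2$ and $e=-1/(\alpha_1\alpha_2)$ into the two formulas of Theorem~\ref{thm:d-section} and simplify. The inner bracket of the genus formula collapses to $(\alpha_1+\alpha_2+1-d)/(\alpha_1\alpha_2)$, and after clearing denominators the numerator factors pleasantly in each case: one expects $(d-1)(d-\alpha_1-\alpha_2)$ when both singular fibres are boundary fibres, $(d-\alpha_i)(d-\alpha_{3-i}-1)$ when exactly $C_i$ is interior, and $d^2-d(1+\alpha_1+\alpha_2)+2\alpha_1\alpha_2$ when both are interior. The boundary count likewise reduces to $d/(\alpha_1\alpha_2)$ corrected by a summand $1-1/\alpha_i$ for each boundary singular fibre, and the congruences above are exactly what guarantees this quantity to be an integer. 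The Hopf case $\alpha_1=\alpha_2=1$ should recover genus $(d-1)(d-2)/2$ with $d$ boundary components, serving as a sanity check. Uniqueness up to isotopy is immediate from the corresponding clause of Theorem~\ref{thm:d-section}.

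I expect the main obstacle to be bookkeeping rather than anything conceptual: keeping the index conventions straight through the $\alpha_1\leftrightarrow\alpha_2$ swap between~\eqref{eqn:action21} and~\eqref{eqn:action12}, handling the residual sign ambiguity that persists when some $\alpha_i=2$, and verifying that the integrality and positivity constraints carved out by the congruences coincide exactly, case by case, with the entries of Table~\ref{table:S3}.
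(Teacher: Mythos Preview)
Your proposal is correct and follows essentially the same route as the paper: specialise Theorem~\ref{thm:d-section} to $M\bigl(0;(\alpha_1,\beta_1),(\alpha_2,\beta_2)\bigr)$ with $\alpha_1\beta_2+\alpha_2\beta_1=1$, convert the divisibility condition $\alpha_i\mid(d\beta_i-1)$ into $\alpha_i\mid(d-\alpha_{3-i})$ (the paper does this via the identity $d\beta_1-1+\alpha_1\beta_2=\beta_1(d-\alpha_2)$, you via $\beta_i\equiv\alpha_{3-i}^{-1}\pmod{\alpha_i}$, which amounts to the same thing), and then read off the four rows. Your explicit factorisations of the genus numerator and your treatment of~\eqref{eqn:ob} as the bound $d\geq\sum_{\alpha_i\nmid d}\alpha_{3-i}$ are exactly the computations the paper suppresses under ``follow from Lemma~\ref{lem:d-section} and Theorem~\ref{thm:d-section}''.
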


\begin{table}
\def\arraystretch{1.5}
\begin{tabular}{|c|c|c|c|c|}\hline
$d$                                               & \#$(\partial\Sigma)$ 
    & $C_1$                       & $C_2$
 & $\mathrm{genus}(\Sigma)$                                \\ \hline\hline
$k\alpha_1\alpha_2$, $k\in\N$                     & $k$
    & $\not\subset\partial\Sigma$ & $\not\subset\partial\Sigma$
 & $\frac{(k\alpha_1-1)(k\alpha_2-1)+1-k}{2}$ \\[1mm] \hline
$k\alpha_1\alpha_2+\alpha_2$, $k\in\N_0$;          & $k+1$
    & $\subset\partial\Sigma$     & $\not\subset\partial\Sigma$
 & $\frac{(k\alpha_1+1)(k\alpha_2-1)+1-k}{2}$ \\ 
$\alpha_1>1$ &&&&\\ \hline
$k\alpha_1\alpha_2+\alpha_1$, $k\in\N_0$;          & $k+1$
    & $\not\subset\partial\Sigma$ & $\subset\partial\Sigma$
 & $\frac{(k\alpha_1-1)(k\alpha_2+1)+1-k}{2}$ \\ 
$\alpha_2>1$ &&&&\\ \hline
$k\alpha_1\alpha_2+\alpha_1+\alpha_2$, $k\in\N_0$; & $k+2$
    & $\subset\partial\Sigma$     & $\subset\partial\Sigma$
 & $\frac{(k\alpha_1+1)(k\alpha_2+1)-1-k}{2}$ \\
$\alpha_1,\alpha_2>1$ &&&&\\ \hline
\end{tabular}
\vspace{1.5mm}
\caption{Positive $d$-sections $\Sigma$ for the Seifert fibrations of~$S^3$.}
\label{table:S3}
\end{table}

The expression \#$(\partial\Sigma)$ in Table~\ref{table:S3} stands for
the number of boundary components of the $d$-section~$\Sigma$.
In the first line, in case $\alpha_i$ equals~$1$, the regular fibre
$C_i$ may well be a component of $\partial\Sigma$.

\begin{proof}[Proof of Corollary~\ref{cor:S3}]
We need to check under which assumptions on $d$
the necessary and sufficient conditions \eqref{eqn:alternative}
and \eqref{eqn:ob} are satisfied.
Recall that we can write the Seifert fibrations of $S^3$
as $M\bigl(0;(\alpha_1,\beta_1),(\alpha_2,\beta_2)\bigr)$ with
$\alpha_1\beta_2+\alpha_2\beta_1=1$, and the Euler number
of this fibration is $e=-1/\alpha_1\alpha_2$.

If both $\alpha_1$ and $\alpha_2$ divide~$d$, then $d=k\alpha_1\alpha_2$
for some $k\in\N$, and the left-hand side of \eqref{eqn:ob}
equals~$-k$. Notice that one or both of the $\alpha_i$ may
equal~$1$.

Next we consider the case that $\alpha_1|(d\beta_1-1)$ (and
$\alpha_1\nmid d$) and $\alpha_2|d$ (including the case
$\alpha_2=1$). The divisibility condition
$\alpha_1|(d\beta_1-1)$ is equivalent to $\alpha_1$ being a divisor of
\[ d\beta_1-1+\alpha_1\beta_2=d\beta_1-\alpha_2\beta_1
=\beta_1(d-\alpha_2),\]
and hence to $\alpha_1|(d-\alpha_2)$. This means that $d$ has to be
of the form $d=k\alpha_1\alpha_2+\alpha_2$ for some
$k\in\N_0=\{0,1,2,\ldots\}$. The case with the roles
of $\alpha_1$ and $\alpha_2$ exchanged is analogous.

If $\alpha_i|(d\beta_i-1)$, $i=1,2$, then the same argument
as before shows that $\alpha_1|(d-\alpha_2)$ and $\alpha_2|(d-\alpha_1)$.
For $\alpha_i\nmid d$ (and in particular $\alpha_i>1$) these
divisibility conditions are equivalent to
$d=k\alpha_1\alpha_2+\alpha_1+\alpha_2$.

The remaining data in Table~\ref{table:S3} on $\#(\partial\Sigma)$
and the genus of $\Sigma$
follow from Lemma~\ref{lem:d-section} and Theorem~\ref{thm:d-section}.
\end{proof}

\begin{rem}
Notice that every Seifert fibration of $S^3$ admits a disc-like
$d$-section, that is, a section of genus zero with one boundary component.
This underlines the subtlety of the construction in~\cite{koer20}
of a Reeb flow on $S^3$ without any disc-like global surface
of section; see also~\cite{kkk}.
\end{rem}
\subsection{Lifting $d$-sections}
\label{subsection:lifting}
We now want to describe an alternative method for computing the genus
of $d$-sections, based on lifting a $d$-section of $\pi_{1,2}$
to a $d$-section of $\pi_{\mathrm{Hopf}}$ in the diagram from
Section~\ref{subsection:alternative}, and then using previous results on
surfaces of section for the Hopf flow~\cite{agz}.

Thus, let $\Sigma\subset S^3$ be a positive $d$-section for~$\pi_{1,2}$.
Write $\tSigma\subset S^3_{\alpha_1,\alpha_2}$ for the lift of $\Sigma$
under the $\alpha_1\alpha_2$-fold covering $\rho_{\alpha_1,\alpha_2}\co
S^3_{\alpha_1,\alpha_2}\rightarrow S^3$. As in Table~\ref{table:S3},
we write $k\in\N_0$ for the number of regular boundary components
of~$\Sigma$. Additionally, one or both of $C_1,C_2$ may belong to
the boundary. (If neither of them does, then $k\geq 1$.)

As observed in Section~\ref{subsection:alternative}, each regular fibre
of $\pi_{1,2}$ is covered $1:1$ by $\alpha_1\alpha_2$ Hopf fibres.
This contributes $k\alpha_1\alpha_2$ boundary components to~$\tSigma$.
Away from the boundary, $\tSigma$ is a $d$-section for $\pi_{\mathrm{Hopf}}$
for the same reason. In particular, if $C_1,C_2\not\subset\partial\Sigma$,
then $\tSigma$ is a positive $d$-section for the Hopf flow.

If $C_i\subset\partial\Sigma$ for at least one $i\in\{1,2\}$, the
situation is more complicated. A point on $C_i$ has $\alpha_i$
preimages on $C_i^{\alpha_1,\alpha_2}$ under $\rho_{\alpha_1,\alpha_2}$,
whereas each point on $\Sigma\setminus(C_1\cup C_2)$ has
$\alpha_1\alpha_2$ preimages. It follows that along $C_i^{\alpha_1,\alpha_2}$
the lifted surface $\tSigma$ is no longer embedded, but looks
like a rather slim open book with spine $C_i^{\alpha_1,\alpha_2}$
and $\alpha_j$ pages, where $(i,j)$ is a permutation of~$(1,2)$.
We shall describe how to modify the map $\rho_{\alpha_1,\alpha_2}$
to ensure that the lifted surface $\tSigma$ is a $d$-section for
the Hopf flow.
\subsubsection{The behaviour of $\tSigma$ near an exceptional
boundary fibre}
Even though this is not relevant for the genus calculations, it is
worth understanding the behaviour of $\tSigma$ near a boundary fibre
$C_1^{\alpha_1,\alpha_2}$, say. Let $\sigma$ be the curve $\partial V\cap
\Sigma$ as in the proof of Lemma~\ref{lem:1-section} or~\ref{lem:d-section},
with $V$ a tubular neighbourhood of $C_1\subset S^3$. Write
$\tsigma$ for the lift of $\sigma$ to~$\tSigma$. This is a (possibly
disconnected) curve on the boundary of a tubular neighbourhood
$\tV$ of $C_1^{\alpha_1,\alpha_2}\subset S^3_{\alpha_1,\alpha_2}$.

Write $\tmu$ for the meridian on $\partial\tV$, and $\tlambda$
for the longitude determined by the class of a Hopf fibre.
Since $C_1^{\alpha_1,\alpha_2}$ is an $\alpha_1$-fold cover of~$C_1$,
but $\Int(\tSigma)$ is an $\alpha_1\alpha_2$-fold cover of $\Int(\Sigma)$
near the boundary, the curve $\tsigma$ lies in the class
$m\tmu+\alpha_2\tlambda$ for some $m\in\Z$.
The fact that $\tSigma$ is a $d$-section
(away from the boundary) translates into $-d=\tsigma\bullet\tlambda=m$,
cf.\ the proof of Lemma~\ref{lem:d-section}. Thus, if $\alpha_2$
and $d$ are coprime
(which in the global picture is equivalent to $C_2$ being
a boundary fibre, unless $\alpha_2=1$), the curve $\tsigma$ is indivisible,
and hence connected.
Otherwise, by the alternative in \eqref{eqn:alternative}, $\alpha_2$
is a divisor of~$d$, and $\tsigma$ is made up of $\alpha_2$ parallel
curves on~$\partial\tV$.

When we remove thin collar neighbourhoods of $C_1^{\alpha_1,\alpha_2}
\subset\tSigma$ and $C_1\subset\Sigma$, then in the first case we obtain
a covering $\tSigma\rightarrow\Sigma$ with a single boundary component
corresponding to $C_1^{\alpha_1,\alpha_2}$, which is an
$\alpha_1\alpha_2$-fold covering of the corresponding boundary component
of~$\Sigma$. In the second case, $C_1^{\alpha_1,\alpha_2}$ gives
rise to $\alpha_2$ boundary components in~$\tSigma$, each of which is
an $\alpha_1$-fold cover of one and the same boundary component
of~$\Sigma$.

Beware, however, that this modification of $\tSigma$ into a surface with
embedded boundary need not give the correct topology of a positive
$d$-section for the Hopf flow. Before we describe the proper
desingularisation of~$\tSigma$, we analyse the case without
exceptional fibres in the boundary.
\subsubsection{The case $C_i\not\subset\partial\Sigma$, $i=1,2$}
\label{subsubsection:noCi}
When $C_1$ is not contained in the boundary of~$\Sigma$, the
$d$-section $\Sigma$ is near $C_1$ made up of $d/\alpha_1$ discs
transverse to~$C_1$. Every neighbouring regular fibre cuts these discs
$\alpha_1$ times, and hence in a total of $d$ points. Since
$C_1^{\alpha_1,\alpha_2}\rightarrow C_1$ is an $\alpha_1$-fold covering,
the lifted surface $\tSigma$ cuts $C_1^{\alpha_1,\alpha_2}$ in $d$ points,
and the branching index of $\tSigma\rightarrow\Sigma$ at these
points equals~$\alpha_2$.

Similarly, each of the $d/\alpha_2$ intersection points of $\Sigma$
with $C_2$ has $\alpha_2$ preimages in $\tSigma$ of branching
index~$\alpha_1$.

Each of the $k$ boundary components of $\Sigma$ is covered $1:1$ by
$\alpha_1\alpha_2$ boundary components of~$\tSigma$, so the branched
covering $\tSigma\rightarrow\Sigma$ extends to a covering of closed surfaces
without additional branching points.

In \cite[Theorem~4.1]{agz} it was shown that a positive $d$-section
$\tSigma$ for the Hopf flow is a connected, orientable surface of genus
$(d-1)(d-2)/2$ and with $d$ boundary components.
Write $g$ for the genus of~$\Sigma$. Then the Riemann--Hurwitz formula gives
\[ 2-(d-1)(d-2)=\alpha_1\alpha_2\Bigl(2-2g-\frac{d}{\alpha_1}-
\frac{d}{\alpha_2}\Bigr)+2d.\]
With $d=k\alpha_1\alpha_2$ this yields the formula in the first line
of Table~\ref{table:S3}.
\subsubsection{The case $C_i\subset\partial\Sigma$}
We illustrate the situation when one or both of $C_1,C_2$ are
boundary fibres by considering the case $C_1\subset\partial\Sigma$
and $C_2\not\subset\partial\Sigma$. The other cases can be treated
in an analogous fashion.

In order to desingularise the lifted surface $\tSigma$
near~$C_1^{\alpha_1,\alpha_2}$, we modify the copy $S^3_{\alpha_1,\alpha_2}$
of the $3$-sphere and the quotient map~$\rho_{\alpha_1,\alpha_2}$.
For some small $\varepsilon>0$, we define
\[ S^3_{\alpha_1,\alpha_2,\varepsilon}:=
\bigl\{(z_1,z_2)\in\C^2\co|z_1|^{2\alpha_1}+|z_2^{\alpha_2}-\varepsilon
z_1^{\alpha_2}|^2=1\bigr\}.\]
This is still a (strictly) starshaped hypersurface with respect to
the origin, and hence a diffeomorphic copy of the $3$-sphere.

In the commutative diagram of Section~\ref{subsection:alternative},
we replace $S^3_{\alpha_1,\alpha_2}$ by $S^3_{\alpha_1,\alpha_2,\varepsilon}$,
and the map $\rho_{\alpha_1,\alpha_2}$ by
\[ \rho^{\varepsilon}_{\alpha_1,\alpha_2}(z_1,z_2):=
\bigl(z_1^{\alpha_1},z_2^{\alpha_2}-\varepsilon z_1^{\alpha_2}\bigr).\]

The preimage of a point $(w_1,w_2)\in S^3$ under
$\rho^{\varepsilon}_{\alpha_1,\alpha_2}$ consists of the points
\[ (z_1,z_2)=\bigl(w_1^{1/\alpha_1},
(w_2+\varepsilon w_1^{\alpha_2/\alpha_1})^{1/\alpha_2}\bigr),\]
where the choice of root $w_1^{1/\alpha_1}$ is the same in both
components. If $w_1\neq 0$ and $w_2+\varepsilon w_1^{\alpha_2/\alpha_1}\neq 0$
for all choices of $w_1^{1/\alpha_1}$, this gives us $\alpha_1\alpha_2$
preimages. Hence, generically a fibre of $\pi_{1,2}$ is covered $1:1$
by $\alpha_1\alpha_2$ Hopf fibres, as for $\rho_{\alpha_1,\alpha_2}$.

\vspace{2mm}
\paragraph{\textit{(i) The desingularised lift of~$C_1$.}}
While under $\rho_{\alpha_1,\alpha_2}$ the fibre $C_1$ of $\pi_{1,2}$
was covered $\alpha_1$ times by~$C_1^{\alpha_1,\alpha_2}$, and
in particular each point on $C_1$ had only $\alpha_1$ preimages,
under $\rho^{\varepsilon}_{\alpha_1,\alpha_2}$ the point
$(w_1,0)\in C_1$ is covered by $\alpha_1\alpha_2$ points
$\bigl(w_1^{1/\alpha_1},\varepsilon^{1/\alpha_2}w_1^{1/\alpha_1}\bigr)$.
Notice that the $\alpha_1$ different choices of $w_1^{1/\alpha_1}$
yield points on the same Hopf fibre, so the preimage of $C_1$
consists of $\alpha_2$ Hopf fibres in the boundary
of $\tSigma$, each of which is an
$\alpha_1$-fold cover of~$C_1$. This allows us to pass to closed surface
$\hat{\tSigma},\hat{\Sigma}$ as in the proof of Theorem~\ref{thm:d-section},
and the boundary component $C_1\subset\partial\Sigma$ contributes
$\alpha_2$ points in $\hat{\tSigma}$ of branching index $\alpha_1$
sitting over a single point in~$\hat{\Sigma}$.

\vspace{2mm}
\paragraph{\textit{(ii) Branch points with $z_1=0$.}}
One case when a point in $S^3$ has fewer than $\alpha_1\alpha_2$
preimages is when $w_1=0$, that is, points on~$C_2$. The fibre
$C_2$ of $\pi_{1,2}$ is covered $\alpha_2$ times by the Hopf
fibre~$C_2^{\alpha_1,\alpha_2}$. Since $C_2$ is a fibre of
multiplicity~$\alpha_2$, the $d$-section $\Sigma$ cuts it in $d/\alpha_2$
points. As in Section~\ref{subsubsection:noCi} this gives us $d$
branch points upstairs on $C_2^{\alpha_1,\alpha_2}$ of branching
index~$\alpha_1$.

\vspace{2mm}
\paragraph{\textit{(iii) Branch points with $z_2=0$.}}
The only other case when a point $(w_1,w_2)\in S^3$ has fewer than
$\alpha_1\alpha_2$ preimages under $\rho^{\varepsilon}_{\alpha_1,\alpha_2}$
is when $w_2+\varepsilon w_1^{\alpha_2/\alpha_1}=0$ for some
choice of $w_1^{1/\alpha_1}$. These correspond to points upstairs
with $z_2=0$, that is, points on~$C_1^{\alpha_1,\alpha_2}$.
The restriction of $\rho^{\varepsilon}_{\alpha_1,\alpha_2}$ to
$C_1^{\alpha_1,\alpha_2}$ is given by
\[ \bigl(\rme^{\rmi\theta},0\bigr)\longmapsto\bigl(\rme^{\rmi\alpha_1\theta},
-\varepsilon\rme^{\rmi\alpha_2\theta}\bigr),\]
so $C_1^{\alpha_1,\alpha_2}$ maps injectively to the $\pi_{1,2}$-fibre
through the point $(1,-\varepsilon)$. We choose $\varepsilon>0$
small enough to ensure that this is not a boundary component of~$\Sigma$.

For ease of notation we set $\lambda=\rme^{\rmi\theta}$. We now ask: what
are the preimages of $(w_1,w_2)=(\lambda^{\alpha_1},-\varepsilon
\lambda^{\alpha_2})$, except for the point $(\lambda,0)\in
C_1^{\alpha_1,\alpha_2}$? The equations
\[ \left\{\begin{array}{l}
z_1^{\alpha_1}=\lambda^{\alpha_1},\\
z_2^{\alpha_2}-\varepsilon z_1^{\alpha_2}=-\varepsilon\lambda^{\alpha_2}
\end{array}\right. \]
have the solutions
\[ \left\{\begin{array}{l}
z_1=\lambda\rme^{2\pi\rmi k/\alpha_1},\;\; k\in\{0,\ldots,\alpha_1-1\}\\
z_2=\sqrt[\alpha_2]{\varepsilon}\lambda
\bigl(\rme^{2\pi\rmi k/\alpha_1}-1\bigr)^{1/\alpha_2},
\end{array}\right. \]
where $\sqrt[\alpha_2]{\varepsilon}$ denotes the root in~$\R^+$.
For each $k\in\{1,\ldots,\alpha_1-1\}$ (\emph{sic}!) and given $\lambda$
we obtain $\alpha_2$ solutions on different Hopf fibres. For $k=0$
we obtain the unique solution $(\lambda,0)$, where we have branching
of index~$\alpha_2$.

The $d$-section $\Sigma$ cuts the $\pi_{1,2}$-fibre through
the point $(1,-\varepsilon)$ in $d$ points. For each of these $d$ points we
obtain a single branch point upstairs of index~$\alpha_2$.

\vspace{2mm}
\paragraph{\textit{Computing the genus of $\tSigma$.}}
We can now compute the genus of $\tSigma$ by counting the
branch points of $\tSigma\rightarrow\Sigma$ upstairs. We have
\begin{itemize}
\item[(i)] $\alpha_2$ branch points of index $\alpha_1$,
\item[(ii)] $d$ branch points of index $\alpha_1$,
\item[(iii)] $d$ branch points of index $\alpha_2$.
\end{itemize}
The Riemann--Hurwitz formula then tells us that
\[ 2-(d-1)(d-2)+\alpha_2(\alpha_1-1)+d(\alpha_1-1)+d(\alpha_2-1)=
\alpha_1\alpha_2(2-2g).\]
With $d=k\alpha_1\alpha_2+\alpha_2$ we obtain the formula in the second line
of Table~\ref{table:S3}.

\begin{rem}
In the case when $C_1\not\subset\partial\Sigma$ and $C_2\subset\partial\Sigma$,
one simply reverses the roles of $z_1$ and $z_2$ in the above argument.
If both $C_1$ and $C_2$ are boundary fibres, one replaces
$S^3_{\alpha_1,\alpha_2}$ by
\[ S^3_{\alpha_1,\alpha_2,\varepsilon_1,\varepsilon_2}:=
\bigl\{(z_1,z_2)\in\C^2\co|z_1^{\alpha_1}-\varepsilon_1z_2^{\alpha_1}|^2
+|z_2^{\alpha_2}-\varepsilon_2
z_1^{\alpha_2}|^2=1\bigr\}\]
for some small $\varepsilon_1,\varepsilon_2>0$, and $\rho_{\alpha_1,\alpha_2}$
by
\[ \rho^{\varepsilon_1,\varepsilon_2}_{\alpha_1,\alpha_2}(z_1,z_2):=
\bigl(z_1^{\alpha_1}-\varepsilon_1z_2^{\alpha_1},
z_2^{\alpha_2}-\varepsilon_2 z_1^{\alpha_2}\bigr).\]
\end{rem}
\section{Surfaces of section and algebraic curves}
\label{section:algebraic}
We now wish to relate surfaces of section for Seifert fibrations of $S^3$
to algebraic curves in weighted projective planes
$\bbP(1,\alpha_1,\alpha_2)$.
The degree-genus formula for such algebraic curves~\cite{orwa72}
then allows us to confirm Corollary~\ref{cor:S3}. Conversely,
our results on positive $d$-sections can be read as an alternative
proof of the degree-genus formula.
Moreover, the lifting of $d$-sections
as described in Section~\ref{subsection:lifting} permits a natural
interpretation as the lifting of algebraic curves from
$\bbP(1,\alpha_1,\alpha_2)$ to~$\CP^2$.

In \cite{agz} it was explained how an algebraic curve $C\subset\CP^2$
of degree $d$ gives rise to a surface of section $\Sigma\subset S^3$ for
the Hopf flow. Simply consider the affine part $\Ca:=C\cap\C^2$ of $C$,
and project $\Ca\setminus\{(0,0)\}$ radially to the unit sphere
$S^3\subset\C^2$. Under suitable assumptions on~$C$, this will
be a surface of section $\Sigma$ for the Hopf flow, with positive boundary
components coming from points on $C$ at infinity, and negative
boundary components if $(0,0)\in\Ca$. The multiplicity of the section
is determined by the number of intersection points of $\Ca\setminus\{(0,0)\}$
with a radial plane in~$\C^2$.

Topologically, the algebraic curve is obtained from the
corresponding surface of section by capping off the
boundary components with discs. This means that the genus of the
algebraic curve equals that of the surface of section.

For instance, if $C=\{F=0\}$ with $F$ a homogeneous polynomial
of degree $d$ of the form
\[ F(z_0,z_1,z_2)=f(z_1,z_2)-z_0^d,\]
the intersection of $\Ca$ with the punctured
radial plane $P_{a,b}:=\{(az,bz)\co z\in\C^*\}$,
where $(a,b)\in S^3\subset\C^2$,
is given by the solutions $z$ to the
equation $f(a,b)z^d=1$. If $f(a,b)\neq 0$, this gives rise to
$d$ points $(az,bz)\in\Ca\cap P_{a,b}$ whose projection to $S^3$
lie on the Hopf fibre
through the point $(a,b)$.
If $f(a,b)=0$, there are no intersections of $\Ca$ with the radial
plane, but now the point $[0:a:b]$ at infinity lies on $C$ and corresponds
to a boundary fibre of $\Sigma$. The surface of section $\Sigma$
has no negative boundary components, since $F(1,0,0)\neq 0$.

It is this simple example that we want to generalise to
positive $d$-sections of $\eqref{eqn:action12}$ and algebraic curves
in weighted projective planes,
as it illustrates some of the essential features of the
correspondence between these two classes of objects.
Moreover, we shall see (Remark~\ref{rem:every-section})
that every positive $d$-section can be
realised by an algebraic curve of this form.
\subsection{Weighted homogeneous polynomials}
As explained in Section~\ref{subsection:alg-curves},
an algebraic curve in $\bbP(1,\alpha_1,\alpha_2)$ is defined by
a $(1,\alpha_1,\alpha_2)$-weighted homogeneous polynomial.
We want to consider such polynomials of the form
\begin{equation}
\label{eqn:F}
F(z_0,z_1,z_2)=f(z_1,z_2)-z_0^d,
\end{equation}
The following lemma is the analogue of the factorisation
of an unweighted homogeneous polynomial in two variables
into linear factors~\cite[Lemma~2.8]{kirw92}.

\begin{lem}
\label{lem:poly}
If $F$ is non-singular, then $f$ is of the form
\[ f(z_1,z_2)=(b_1^{\alpha_1}z_1^{\alpha_2}-a_1^{\alpha_2}z_2^{\alpha_1})
\cdot\ldots\cdot(b_k^{\alpha_1}z_1^{\alpha_2}-a_k^{\alpha_2}z_2^{\alpha_1})
\cdot z_1^{\varepsilon_1}\cdot z_2^{\varepsilon_2}\]
with $\varepsilon_1,\varepsilon_2\in\{0,1\}$ and
$[a_i:b_i]_{(\alpha_1,\alpha_2)}\in\bbP(\alpha_1,\alpha_2)$,
$i=1,\ldots,k$, pairwise distinct points different from
$[1:0]_{(\alpha_1,\alpha_2)}$ and $[0:1]_{(\alpha_1,\alpha_2)}$. If at
least one $\varepsilon_i$ equals~$1$, one may have $k=0$.

In particular, the degree $d$ of $F$ equals
$d=k\alpha_1\alpha_2+\varepsilon_1\alpha_1+\varepsilon_2\alpha_2$.
\end{lem}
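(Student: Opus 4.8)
The plan is to read off the admissible monomials of $f$ from weighted homogeneity, factor $f$ as far as the algebra allows, and then use the non-singularity of $F$ to eliminate all but the stated shape. First I would convert the non-singularity hypothesis into a statement about $f$ alone. Since $F=f(z_1,z_2)-z_0^d$, we have $\partial F/\partial z_0=-d\,z_0^{d-1}$, so any singular point of $F$ must have $z_0=0$; there $F=0$ reads $f=0$, and the remaining two equations are $\partial f/\partial z_1=\partial f/\partial z_2=0$. By Euler's identity for a weighted-homogeneous polynomial,
\[ \alpha_1 z_1\frac{\partial f}{\partial z_1}+\alpha_2 z_2\frac{\partial f}{\partial z_2}=d\,f, \]
a common zero of the two partial derivatives automatically lies on $\{f=0\}$. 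Hence $F$ is non-singular if and only if $\partial f/\partial z_1$ and $\partial f/\partial z_2$ have no common zero in $\C^2\setminus\{(0,0)\}$.

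Next I would pin down the shape of $f$ purely from weighted homogeneity. Factoring out the largest powers of the coordinates, write $f=z_1^{\varepsilon_1}z_2^{\varepsilon_2}\,h$ with $h$ divisible by neither $z_1$ nor $z_2$. Then $h$ contains both a pure $z_1$-power and a pure $z_2$-power, so its weighted degree $D$ is divisible by both $\alpha_1$ and $\alpha_2$, whence $D=k\alpha_1\alpha_2$ by coprimality, and every monomial of $h$ must be of the form $z_1^{\alpha_2 i}z_2^{\alpha_1(k-i)}$. Thus $h$ is an honest homogeneous polynomial of degree $k$ in the two quantities $z_1^{\alpha_2}$ and $z_2^{\alpha_1}$, and the fundamental theorem of algebra splits it into linear factors $b_i^{\alpha_1}z_1^{\alpha_2}-a_i^{\alpha_2}z_2^{\alpha_1}$ (the required $\alpha_1$-th and $\alpha_2$-th roots always exist over $\C$, and an overall constant can be absorbed into the representatives $a_1,b_1$ without moving the point $[a_1:b_1]_{(\alpha_1,\alpha_2)}$). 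Since the extreme coefficients of $h$ are non-zero, each $[a_i:b_i]_{(\alpha_1,\alpha_2)}$ is different from $[1:0]_{(\alpha_1,\alpha_2)}$ and $[0:1]_{(\alpha_1,\alpha_2)}$.

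Finally I would feed this factorisation back into the criterion of the first step. If $\varepsilon_1\geq 2$, then $z_1$ divides both $\partial f/\partial z_1$ and $\partial f/\partial z_2$, so every point with $z_1=0$, $z_2\neq 0$ is a common zero away from the origin, a contradiction; the symmetric argument gives $\varepsilon_2\leq 1$. Likewise, a repeated linear factor $b^{\alpha_1}z_1^{\alpha_2}-a^{\alpha_2}z_2^{\alpha_1}$ would divide both partial derivatives, and since $a,b\neq 0$ that factor has a zero with $z_1,z_2\neq 0$, again forbidden; hence the $\varepsilon_i$ lie in $\{0,1\}$ and the points $[a_i:b_i]$ are pairwise distinct. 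Comparing weighted degrees then yields $d=k\alpha_1\alpha_2+\varepsilon_1\alpha_1+\varepsilon_2\alpha_2$.

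I expect the genuine content to be the structural step: showing $\alpha_1\alpha_2\mid D$ and that $h$ is homogeneous in $z_1^{\alpha_2}$ and $z_2^{\alpha_1}$, since this is precisely what forces the peculiar factor shape; the Euler reduction and the exclusion of the bad configurations are comparatively short once that is in place. The point needing the most care is verifying that each forbidden configuration produces a common zero of the two partial derivatives that genuinely lies in $\C^2\setminus\{(0,0)\}$, rather than only at an excluded or degenerate point.
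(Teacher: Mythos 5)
Your proof is correct and follows essentially the same route as the paper's: constrain the monomials of $f$ via weighted homogeneity and the coprimality of $\alpha_1,\alpha_2$ to exhibit $f$ as $z_1^{\varepsilon_1}z_2^{\varepsilon_2}$ times a homogeneous polynomial in $z_1^{\alpha_2}$ and $z_2^{\alpha_1}$, factor the latter by the fundamental theorem of algebra, and use non-singularity of $F$ to exclude $\varepsilon_i\geq 2$ and repeated factors. The only difference is cosmetic: where the paper exhibits explicit singular points of $F$ (and is terse about the repeated-factor case), you first reduce non-singularity of $F$ to the absence of common zeros of $\partial f/\partial z_1$ and $\partial f/\partial z_2$ in $\C^2\setminus\{(0,0)\}$ via the weighted Euler identity, which is a cleaner packaging of the same verification.
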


\begin{proof}
We have $f(0,1)=0$ if and only if $f$ is divisible by~$z_1$. Similarly,
$f(1,0)=0$ holds if and only if $f$ is divisible by~$z_2$.
In either case, there can be at most one such factor each of
$z_1$ or $z_2$, lest $F$ be singular.

After dividing by these factors, in case they are present, we have
a sum of complex monomials $c_{mn}z_1^mz_2^n$ with
$\alpha_1m+\alpha_2n=\mathrm{const.}$\ and at least one non-zero monomial
each without a $z_1$- or $z_2$-factor, respectively.
From the identity $\alpha_1m+\alpha_2n=\alpha_1m'+\alpha_2n'$
we deduce $\alpha_1|(n-n')$ and $\alpha_2|(m-m')$, which implies that
$f$ is of the form
\[ f(z_1,z_2)=z_1^{\varepsilon_1}\cdot z_2^{\varepsilon_2}\cdot
\sum_{i=0}^k c_iz_1^{i\alpha_2}z_2^{(k-i)\alpha_1} \]
with $c_0,c_k\neq 0$.

The sum in this description is a homogeneous polynomial
of degree $k$ in $z_1^{\alpha_2}$ and $z_2^{\alpha_1}$,
and hence, by \cite[Lemma~2.8]{kirw92}, has a factorisation
into linear factors in $z_1^{\alpha_2}$ and $z_2^{\alpha_1}$.
The condition on
the $[a_i:b_i]_{(\alpha_1,\alpha_2)}$ is equivalent to
$f$ having no repeated factors.
\end{proof}
\subsection{Constructing $d$-sections from algebraic curves}
We now want to show how a complex algebraic curve $C=\{F=0\}$,
with $F$ as in~\eqref{eqn:F}, gives rise to a positive
$d$-section of the $A_{1,2}$-flow. For this we need to work
with a weighted `radial' projection $\C^2\setminus\{(0,0)\}
\rightarrow S^3$
given by sending $(z_1,z_2)$ to the unique point
on $S^3$ of the form $(z_1/r^{\alpha_1},z_2/r^{\alpha_2})$, $r\in\R^+$.
This projection map is smooth. We define weighted `punctured
radial complex planes'
for $(a,b)\in S^3\subset\C^2$ as
\[ P_{a,b}:=\bigl\{(az^{\alpha_1},bz^{\alpha_2})\co z\in\C^*\bigr\}.\]
Under the radial projection we have, for $z=r\rme^{\rmi\theta}$,
\[ \bigl(az^{\alpha_1},bz^{\alpha_2}\bigr)\longmapsto
\bigl(\rme^{\rmi\alpha_1\theta}a,
\rme^{\rmi\alpha_2\theta}b\bigr),\]
that is, $P_{a,b}$ projects to the $A_{1,2}$-orbit through $(a,b)$
on~$S^3$.

The following is the generalisation of \cite[Proposition~5.1]{agz}
from the Hopf flow to all Seifert fibrations of~$S^3$.

\begin{prop}
The complex algebraic curve $C=\{F=0\}$ in the weighted complex plane
$\bbP(1,\alpha_1,\alpha_2)$, where $F$ is a weighted
homogeneous polynomial of the form~\eqref{eqn:F},
defines a positive $d$-section for the flow $(z_1,z_2)\mapsto
\bigl(\rme^{\rmi\alpha_1\theta}z_1,\rme^{\rmi\alpha_2\theta}z_2\bigr)$
on $S^3$ if and only if $F$ is non-singular.
\end{prop}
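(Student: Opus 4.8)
The plan is to realise the interior of the prospective surface of section as the weighted radial projection of the affine curve $\Ca=\{f=1\}\subset\C^2$ (in the chart $U_0\cong\C^2$ the equation $F=0$ becomes $f=1$), and to reduce the whole equivalence to an analysis of $C$ at its points at infinity. The key preliminary observation is that $\partial F/\partial z_0=-dz_0^{d-1}$, so every singular point of $F$ has $z_0=0$ and hence lies on the line at infinity $\{z_0=0\}=\bbP(\alpha_1,\alpha_2)$. In particular $\Ca$ is \emph{always} smooth: were $\nabla f$ to vanish at a point of $\{f=1\}$, the Euler relation $\alpha_1z_1\,\partial f/\partial z_1+\alpha_2z_2\,\partial f/\partial z_2=df$ would give $0=df=d$, a contradiction. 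Thus non-singularity of $F$ is only felt along the boundary, and both implications will come down to the local behaviour of $C$ at infinity.

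Next I would verify that, regardless of non-singularity, the projection of $\Ca$ meets the flow correctly over the regular part. For $(a,b)\in S^3$ with $f(a,b)\neq 0$, weighted homogeneity turns the intersection $\Ca\cap P_{a,b}$ into the equation $z^df(a,b)=1$, with $d$ simple roots; the parametrisation $z\mapsto(az^{\alpha_1},bz^{\alpha_2})$ of $P_{a,b}$ is injective for $a,b\neq 0$ because $\gcd(\alpha_1,\alpha_2)=1$, so these roots give $d$ distinct points projecting to $d$ distinct points of the orbit through $(a,b)$. For an exceptional fibre $C_i$ that is \emph{not} a boundary one has $\alpha_i\mid d$ and this parametrisation is $\alpha_i$-to-one, so the $d$ roots collapse to the expected $d/\alpha_i$ intersection points. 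Hence the projection is a smooth embedded surface, transverse to every orbit with $f\neq 0$ and of the correct multiplicity. Positivity is then automatic: since $F(1,0,0)=f(0,0)-1=-1\neq 0$, the origin is not on $\Ca$, so every boundary fibre arises from infinity and, exactly as in the Hopf case \cite[Proposition~5.1]{agz}, the complex orientation of $\Ca$ orients each boundary fibre by the flow.

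The heart of the argument is the behaviour at the zeros of $f$ on $S^3$, which are precisely the fibre directions in which $\Ca$ escapes to infinity and which are the candidate boundary fibres. For the implication ``$F$ non-singular $\Rightarrow$ positive $d$-section'' I would invoke Lemma~\ref{lem:poly}: then $f$ has pairwise distinct factors with $\varepsilon_1,\varepsilon_2\in\{0,1\}$, so each zero of $f$ on $S^3$ is \emph{simple}. A simple zero produces a single smooth puncture of $\Ca$ at infinity, whose radial projection is a clean helicoidal collar limiting transversally onto one embedded fibre; the $k$ regular factors give the $k$ regular boundary fibres and the factors $z_1^{\varepsilon_1},z_2^{\varepsilon_2}$ give the exceptional boundary fibres $C_2,C_1$, reproducing the boundary count. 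Conversely, by the Euler relation $F$ is singular if and only if $f$ has a critical point off the origin, i.e.\ if and only if $f$ fails to be square-free (some $(b z_1^{\alpha_2}-a z_2^{\alpha_1})^2$, or $z_1^2$, or $z_2^2$, divides $f$). At such a repeated zero at least two branches of $\Ca$ run to infinity in the \emph{same} fibre direction, so the projected surface is approached by more than one sheet along that fibre and cannot be embedded; already the model $F=z_1^2-z_0^2$ exhibits the two affine lines $z_1=\pm 1$ limiting onto the single fibre $C_2$. Thus a singular $F$ cannot yield an embedded positive $d$-section, giving the remaining implication.

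The step I expect to be the main obstacle is precisely this local analysis at infinity, and especially the two orbifold points $[0:1:0]$ and $[0:0:1]$ of $\bbP(1,\alpha_1,\alpha_2)$, where ``simple zero'' has to be read in the cyclic quotient chart and amounts to the condition $\varepsilon_i\le 1$. Using the holomorphic chart $z\mapsto z^{\alpha_i}$ around such a point (as in Section~\ref{subsection:alg-curves}) one checks that $C$ is smooth there exactly when the relevant $\varepsilon_i\le 1$, so that the helicoidal collar around the exceptional fibre closes up to a single embedded boundary component. Writing out this local normal form carefully --- for regular and exceptional fibres alike, and keeping track of orientations so that the collar is attached on the correct side --- is the one place that calls for genuine care rather than routine computation.
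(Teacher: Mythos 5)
Your proposal is correct and follows essentially the same route as the paper's own (sketched) proof: the paper likewise projects the affine curve $\{f=1\}$ by the weighted radial projection to $S^3$, counts intersections with the weighted radial planes $P_{a,b}$ ($d$ points generically, $d/\alpha_i$ points on an exceptional fibre $C_i$ that is not a boundary fibre, boundary fibres arising from the zeros of $f$ at infinity), and invokes Lemma~\ref{lem:poly} for the asymptotic behaviour near the boundary fibres. Your additional details --- the Euler-relation argument localising all singularities of $F$ on $\{z_0=0\}$, and the explicit converse via repeated factors of $f$ forcing several sheets to limit on one fibre --- are points the paper delegates to the analogy with \cite[Proposition~5.1]{agz} rather than a different method.
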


The proof of this proposition is analogous to that of
\cite[Proposition~5.1]{agz}, and we only sketch the necessary modifications.

The affine part $\Ca\bigl\{(z_1,z_2)\in\C^2\co f(z_1,z_2)=1\bigr\}$
can be shown to intersect each $P_{a,b}$ in $d$ points
that do not lie on a real radial ray, and hence project to $d$ distinct
points on the corresponding $A_{1,2}$-orbit on~$S^3$, except in the following
cases.

\begin{itemize}
\item[-] Solutions of the equation $f(z_1,0)=1$ or $f(0,z_2)=1$
correspond to $C_1$ or $C_2$ \emph{not} being a boundary fibre,
and they give rise to $d/\alpha_1$ or $d/\alpha_2$ intersection
points, respectively.
\item[-] Solutions to the equation $f(z_1,z_2)=0$, that is,
points at infinity on~$C$, correspond to boundary fibres.
\end{itemize}

The correct asymptotic behaviour near the boundary fibres
can be demonstrated with the explicit factorisation of $f$
given by Lemma~\ref{lem:poly}.
\subsection{Branched coverings of algebraic curves}
\label{subsection:coverings-curves}
The map $\rho_{\alpha_1,\alpha_2}\co S^3_{\alpha_1,\alpha_2}\rightarrow
S^3$ has a natural extension to
\[ \begin{array}{rccc}
\rho_{\alpha_1,\alpha_2}\co
 & \CP^2         & \longrightarrow & \bbP(1,\alpha_1,\alpha_2)\\
 & [z_0:z_1:z_2] & \longmapsto     & [z_0:z_1^{\alpha_1}:
                                      z_2^{\alpha_2}]_{(1,\alpha_1,\alpha_2)}.
\end{array}\]
A weighted algebraic curve $C=\{F=0\}\subset\bbP(1,\alpha_1,\alpha_2)$
lifts to the algebraic curve $\tC=\{\tF=0\}\subset\CP^2$, where
$\tF$ is the homogeneous polynomial of degree $d$ defined by
$\tF(z_0,z_1,z_2)=F(z_0,z_1^{\alpha_1},z_2^{\alpha_2})$.
The lift $\tC$ of a non-singular algebraic curve  $C$
may well be singular.

Under this extended map $\rho_{\alpha_1,\alpha_2}$,
the standard punctured plane
\[ \bigl\{(az,bz)\co z\in\C^*\bigr\}\subset\C^2, \]
with $(a,b)\in S^3_{\alpha_1,\alpha_2}$, maps
to~$P_{a^{\alpha_1},b^{\alpha_2}}$.

The discussion in Section~\ref{subsection:lifting}
carries over to algebraic curves $C=\{F=0\}$ with
$F$ as in~\eqref{eqn:F}, only changing notation to (weighted)
homogeneous coordinates.
This means that we can compute the
genus of a weighted algebraic curve in $\bbP(1,\alpha_1,\alpha_2)$
by lifting it to an algebraic curve in $\CP^2$, and then
applying the Riemann--Hurwitz formula and the known results
about algebraic curves in $\CP^2$.
In the covering $\tC\rightarrow C$
we see the same branching behaviour
as for the (extended) surfaces of section, and
we can apply
the same process of desingularisation in case $C$ contains
one or both of the points $[0:1:0]_{(1,\alpha_1,\alpha_2)}$
or $[0:0:1]_{(1,\alpha_1,\alpha_2)}$ at infinity.
\subsection{Computing the genus of algebraic curves via
surfaces of section}
Here is the general formula for computing the genus of a
non-singular algebraic curve in a weighted projective plane,
see~\cite[Corollary~3.5]{orwa72}, \cite[Theorem~5.3.7]{hosg20}.

\begin{thm}
\label{thm:d-g}
The genus of a non-singular algebraic curve of degree $d$ in
the weighted projective plane $\bbP(a_0,a_1,a_2)$ is given by
\[ g=\frac{1}{2}\Bigl(\frac{d^2}{a_0a_1a_2}-
d\sum_{i<j}\frac{\gcd(a_i,a_j)}{a_ia_j}+
\sum_i\frac{\gcd(a_i,d)}{a_i}-1\Bigr).\qed\]
\end{thm}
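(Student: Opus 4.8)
The plan is to deduce the formula from the results on surfaces of section already obtained, in two stages: first the special case $a_0=1$, where the weighted projective planes $\bbP(1,\alpha_1,\alpha_2)$ arise directly as the ambient spaces of the curves associated with $d$-sections of the $A_{1,2}$-flow, and then the general case by a branched-covering reduction to $a_0=1$. Throughout I use that the genus of a non-singular curve of degree $d$ in a fixed $\bbP(a_0,a_1,a_2)$ depends only on~$d$: the singular weighted-homogeneous polynomials of a given degree form a complex-analytic subvariety of positive codimension in the vector space of all such polynomials, so the non-singular ones form a connected set, and the diffeomorphism type of the zero locus is constant in this family. Hence it suffices to compute the genus of one convenient non-singular representative for each admissible~$d$.

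For $a_0=1$, with $\alpha_1,\alpha_2$ coprime, I would take the curves $C=\{F=0\}$ with $F=f(z_1,z_2)-z_0^d$ of Lemma~\ref{lem:poly}. By the correspondence of Section~\ref{subsection:coverings-curves}, such a curve is obtained by capping off a positive $d$-section $\Sigma$ of the $A_{1,2}$-flow on $S^3$ with discs, so that $\mathrm{genus}(C)=\mathrm{genus}(\Sigma)$. Substituting the base genus $0$, the two multiplicities $\alpha_1,\alpha_2$, and the Euler number $e=-1/\alpha_1\alpha_2$ into the formula of Theorem~\ref{thm:d-section} and simplifying expresses $\mathrm{genus}(\Sigma)$ in closed form; expanding the right-hand side of Theorem~\ref{thm:d-g} for the weights $(1,\alpha_1,\alpha_2)$ gives another closed form, and the two coincide precisely by the elementary identity
\[ \frac{\gcd(\alpha_1,d)}{\alpha_1}+\frac{\gcd(\alpha_2,d)}{\alpha_2}
   =2-\sum_{\alpha_i\nmid d}\Bigl(1-\frac{1}{\alpha_i}\Bigr), \]
checked index by index: if $\alpha_i\mid d$ both sides contribute $1$, while if $\alpha_i\nmid d$ then $\gcd(\alpha_i,d)=1$ (the two alternatives of \eqref{eqn:alternative} being mutually exclusive) and both sides contribute $1/\alpha_i$. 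Equivalently, and more in the spirit of Section~\ref{subsection:lifting}, one may bypass the special form and lift an arbitrary non-singular $C$ through $\rho_{\alpha_1,\alpha_2}\co\CP^2\to\bbP(1,\alpha_1,\alpha_2)$ to $\tC\subset\CP^2$, obtaining $\mathrm{genus}(C)$ from the classical value $(d-1)(d-2)/2$ by the same Riemann--Hurwitz count carried out there for the surfaces of section.

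For $a_0>1$ I would reduce to the case just settled by the $a_0$-fold cyclic branched covering
\[ q\co\bbP(1,a_1,a_2)\longrightarrow\bbP(a_0,a_1,a_2),\qquad
   [w_0:w_1:w_2]\longmapsto[w_0^{a_0}:w_1:w_2]_{\bfa}, \]
whose deck group is generated by $w_0\mapsto\rme^{2\pi\rmi/a_0}w_0$. A non-singular $C=\{F=0\}$ of degree $d$ pulls back to $\tC=\{F(w_0^{a_0},w_1,w_2)=0\}\subset\bbP(1,a_1,a_2)$, again of degree~$d$, whose genus is now known. Feeding the $a_0=1$ formula for $\tC$ into the Riemann--Hurwitz relation for $\tC\to C$ should then yield the formula for $C$, consistency being guaranteed by the symmetry of the final expression in $(a_0,a_1,a_2)$.

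The main obstacle is the Riemann--Hurwitz bookkeeping in this last reduction. As in Section~\ref{subsection:lifting}, the pulled-back curve $\tC$ need not be non-singular --- singularities appear over the points of $C$ on the branch locus $\{w_0=0\}$ at which $C$ is tangent to the $w_0$-direction --- so the covering must first be desingularised by a small perturbation in the style of $\rho^{\varepsilon}_{\alpha_1,\alpha_2}$, after which the branch contributions over the coordinate locus and over the three cyclic-quotient singular points of $\bbP(a_0,a_1,a_2)$ have to be tallied with the correct indices. Arranging this count so that the terms $\gcd(a_i,d)/a_i$ emerge uniformly, and matching it against the $a_0=1$ computation, is where the real work lies; the remaining simplifications are routine arithmetic.
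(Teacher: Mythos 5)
Your proposal needs to be measured against what the paper actually does with this statement: Theorem~\ref{thm:d-g} is \emph{quoted}, not proved --- the box at the end of the statement marks it as a result taken from \cite[Corollary~3.5]{orwa72} and \cite[Theorem~5.3.7]{hosg20} --- and the paper only remarks afterwards that its surface-of-section arguments yield an \emph{alternative} proof in the case $a_0=1$, for curves of the special form~\eqref{eqn:F}, with that restriction removable by genericity, while the extension to general weights is explicitly flagged as merely ``plausible''. Your first stage reproduces this suggested route, and that part is essentially sound. But as a proof of the theorem as stated, even for $a_0=1$, it has a genuine coverage gap. Your connectedness argument shows only that the genus depends on $d$ alone; to evaluate it you still need, for \emph{every} degree $d$ carrying a non-singular curve, a representative you can compute with, and curves of the form~\eqref{eqn:F} exist only for $d=k\alpha_1\alpha_2+\varepsilon_1\alpha_1+\varepsilon_2\alpha_2$ (Lemma~\ref{lem:poly}), i.e.\ exactly the degrees of Table~\ref{table:S3}. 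These do not exhaust the degrees to which the theorem applies: in $\bbP(1,2,3)$ the curve $\{z_0^7+z_0z_1^3+z_1^2z_2+z_0z_2^2=0\}$ is non-singular of degree~$7$ (the formula gives genus~$1$), yet $7$ is not of the above form --- the $(2,3)$-fibration of $S^3$ admits a $7$-section only with a \emph{negative} boundary component, so no positive $7$-section and no curve of type~\eqref{eqn:F} exist. Such degrees ($d\equiv 1$ mod $\alpha_i$, realised by monomials mixing $z_0$ with $z_1$ or $z_2$) are invisible to your stage one, and your fallback --- lifting an arbitrary non-singular $C$ to $\CP^2$ --- founders on the fact, noted in Section~\ref{subsection:coverings-curves}, that the lift $\tC$ may be singular, so the classical value $(d-1)(d-2)/2$ is not available without a desingularisation analysis you do not supply.

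The second stage is a plan, not a proof. The covering $q\co\bbP(1,a_1,a_2)\rightarrow\bbP(a_0,a_1,a_2)$ is indeed well defined (pairwise coprimality of the weights makes the deck group $\Z_{a_0}$ act effectively) and is the natural reduction, but everything that would make the formula come out --- the local analysis of the singularities of $\tC$ over $C\cap\{z_0=0\}$, the perturbation that desingularises the covering, and the Riemann--Hurwitz count producing the terms $\gcd(a_i,d)/a_i$ --- is exactly what you defer as ``where the real work lies''. A proof whose central computation is acknowledged to be missing is a gap, not a proof; and this is precisely the step the authors themselves declined to claim, which is why they cite Orlik--Wagreich and Hosgood for the general statement. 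In short: your stage one coincides with the paper's sketched alternative argument but proves strictly less than the full $a_0=1$ case, and your stage two is the open part of the problem, left unexecuted.
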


For $(a_0,a_1,a_2)=(1,\alpha_1,\alpha_2)$ this confirms the
values in Table~\ref{table:S3}.

\begin{rem}
\label{rem:every-section}
Consider the possible values of $d$ for which there exists,
according to Table~\ref{table:S3}, a positive $d$-section
for the Seifert fibration of $S^3$ defined by~\eqref{eqn:action12}.
By Lemma~\ref{lem:poly}, any such $d$ can be realised as the
degree of a weighted homogeneous polynomial $F$ of the form~\eqref{eqn:F}.
Hence, the algebraic curve $\{F=0\}$ in $\bbP(1,\alpha_1,\alpha_2)$
gives rise to a $d$-section with the chosen value of~$d$.
By the uniqueness statement in Theorem~\ref{thm:d-section},
this means that \emph{every} surface of section for any Seifert fibration
of $S^3$ comes from an algebraic curve $\{F=0\}\subset
\bbP(1,\alpha_1,\alpha_2)$ with $F$ as in~\eqref{eqn:F}.
\end{rem}

Conversely, the arguments in Section~\ref{subsection:lifting}
--- applied to coverings of algebraic surfaces as
explained in Section~\ref{subsection:coverings-curves} ---
provide an alternative proof of Theorem~\ref{thm:d-g}
in the case $a_0=1$ and for such special algebraic curves; the latter
restriction can be removed with genericity arguments as in
\cite[Section~6]{agz}.

It seems plausible that the reasoning in Section~\ref{subsection:lifting}
can be extended to a geometric proof of Theorem~\ref{thm:d-g}
in full generality.
\begin{ack}
We are grateful to Christian Lange and Umberto Hryniewicz
for their interest in this work. Their questions and comments
have contributed significantly to the writing of this paper.
The anonymous referee made useful suggestions for improving the
exposition.
\end{ack}
\end{document}